\newtheorem{theo+}           {Theorem}
\newtheorem{prop+}           {Proposition}
\newtheorem{coro+}           {Corollary}
\newtheorem{lemm+}           {Lemma}
\theoremstyle{definition}
\newtheorem{defi+}           {Definition}
\newtheorem{problem}         {Problem}
\theoremstyle{remark}
\newtheorem{rema+}           {Remark}
\newenvironment{theorem}{\begin{theo+}}{\end{theo+}}
\newenvironment{proposition}{\begin{prop+}}{\end{prop+}}
\newenvironment{corollary}{\begin{coro+}}{\end{coro+}}
\newenvironment{lemma}{\begin{lemm+}}{\end{lemm+}}
\newenvironment{remark}{\begin{rema+}}{\end{rema+}}
\newenvironment{definition}{\begin{defi+}}{\end{defi+}}
\newtheorem{THEO}{Theorem}
\newtheorem{LEM}{Lemma}
\newtheorem{PROP}{Proposition}
\newcommand{\al}{\alpha}
\newcommand {\Ga} {\Gamma}
\newcommand{\bC}{\mathbb C}
\newcommand{\bR}{\mathbb R}
\newcommand{\D}{\mathcal D}
\newcommand{\C}{\mathcal C}
\newcommand{\QQ}{\mathbb{Q}}
\def\d{{\partial}}
\def\newop#1{\expandafter\def\csname #1\endcsname{\mathop{\rm
#1}\nolimits}}
\begin{document}
          \numberwithin{equation}{section}

          \title[On existence of quasi-Strebel structures for  $k$-differentials ]{On existence of quasi-Strebel structures for meromorphic $k$-differentials }

\author[B.~Shapiro]{Boris Shapiro}
\address{Department of Mathematics, Stockholm University, SE-106 91
Stockholm,
         Sweden}
\email{shapiro@math.su.se}

          \author[G.~Tahar]{Guillaume Tahar}
\address{Faculty of Mathematics and Computer Science, Weizmann Institute of Science, Rehovot, 7610001 Israel}
\email{tahar.guillaume@weizmann.ac.il}

\date{\today}
\keywords{quadratic and high order differentials, quasi-Strebel structures}
\subjclass[2010]{Primary 30F30, Secondary 31A05}

\begin{abstract}   In this paper, motivated by the classical notion of a Strebel quadratic differential on a compact Riemann surfaces without boundary 
 we introduce the notion of a quasi-Strebel structure for a meromorphic differential of an arbitrary order.  It turns out that every differential of even order $k\ge 4$ satisfying certain natural conditions at its singular points admits such a structure. The case of differentials of odd order is quite different and our existence result involves some arithmetic conditions.
  We discuss the set of quasi-Stebel structures associated to a given differential and introduce the subclass of positive  $k$-differentials. Finally, we provide a  family of examples of positive rational differentials  and explain their connection  with the classical Heine-Stieltjes theory of linear differential equations with polynomial coefficients. 
\end{abstract}

\maketitle

\section {Introduction}
 
 \begin{defi+}
A (meromorphic) differential $\Psi$ of order $k\ge 2$ on a compact orientable Riemann surface $Y$ without boundary is
a (meromorphic) section of the $k$-th tensor power 
$(T^*_{\mathbb{C}}Y)^{\otimes k}$ of the holomorphic cotangent
bundle $T^*_{\mathbb{C}}Y$. The zeros and the poles of $\Psi$
constitute the set $Cr_\Psi$ of \emph{critical points} (alias, singularities) of $\Psi$. 
\end{defi+}

In what follows, we will use the convention that the order of a zero is a positive integer while the order of a pole is a negative integer. 
For a differential $\Psi$ of order $k$ given locally by $f(z)dz^k$ in a neighbourhood of a non-critical point,  we define $k$ locally distinct direction fields called {\em horizontal} which are given by the condition that $f(z)dz^k$ is positive. Integral curves of these direction fields are called (horizontal) \emph{trajectories}. Any two  consecutive direction fields differ by the angle $\frac{2\pi}{k}$   and for even $k=2\ell$, they form $\ell$ locally distinct line fields. However, for a general differential of order $k>2,$ it is impossible to globally distinguish these $k$  direction fields because every two of them are obtained by the analytic continuation of each other.  Multiplication of $\Psi$ by the scalar factor $e^{\frac {\pi i }{k}}, \; i=\sqrt{-1}$ does not change these direction fields and their trajectories.   

Extracting the $k$-th root, one can globally interprete an arbitrary differential of order $k$ as a multi-valued meromorphic 
abelian differential on $Y$.   
Outside of the poles and the zeroes of $\Psi$, it is locally representable as the $k$-th power of a holomorphic abelian differential $\omega$. Integration of $\omega$ in a neighborhood of a point $z_{0}$ gives local coordinates whose transition maps are of the type $z \mapsto e^{i\theta}z+c$ where $\theta\in\left\lbrace 0,\dfrac{2\pi}{k}\dots,(k-1)\dfrac{2\pi}{k} \right\rbrace$. 
 In such a way, the Riemann surface $Y$ punctured at its singularities is locally isometric to the complex plane $\mathbb{C}$ endowed with the flat structure induced by $dz$. In other words, we obtain a flat metric on $Y\setminus Cr_\Psi$ induced by the integration of $\omega$ where the distance between two points $p_1$ and $p_2$ is $\vert \int_{p_1}^{p_2}\omega \vert $, see \cite{ BCGGM, Ta}. (In particular, $dz$ induces the usual Euclidean metric on $\bC$.)  The surface $Y\setminus Cr_\Psi$ with the latter flat metric will be called the \emph{flat model} of $\Psi$.
 
 \smallskip
 This flat metric extends from $Y\setminus Cr_\Psi$ to all zeros and poles of order $\ell >  -k$; such  singularities are called \emph{conical of order $\ell$}. Any conical singularity of order $\ell$ has a neighborhood on $Y$ isometric to a neighborhood of the apex of a flat cone with the total angle  $\frac{(\ell+k)2\pi}{k}$. Analogously, a pole of order $\ell \le -k$ corresponds to the point at $\infty$ of a flat cone with the total angle  $-\frac{(\ell+k)2\pi}{k}$, see Proposition~\ref{pr:local}. By convention, flat cones of angle $0$ (i.e., poles of order $-k$) are half-infinite cylinders.  
 

\smallskip
 In general, differentials of order $k>2$ are much less studied than their quadratic counterpart. Nevertheless they sometimes appear in the literature, see e.g.  \cite{Str2, BCGGM, BH, DW, HLL, La, PePi, Ta, Bu} and, as we already mentioned,  are closely related to the theory of locally flat surfaces which is used  in the study of the moduli spaces of curves and  dynamics of interval exchange maps, see e.g. \cite{Zo}. Moreover, theory of translation surfaces (which correspond to the abelian differentials) provides new important insights into the dynamics of billiards by means of  algebraic geometry and renormalization theory.

\smallskip
Moduli spaces of pairs $(Y,\Psi)$ consisting of  a compact Riemann surface $Y$ and a $k$-differential on it are naturally stratified by the loci where the differentials have the same set of multiplicities of their singular points. For example, in the case of genus zero, a meromorphic differential of order $k$ can be globally represented as $\Psi(z)=R(z)dz^k$,  where $R(z)$ is a rational function. So, up to a non-vanishing constant factor, $\Psi$ is completely determined by the multiplicities and the positions of its singularities. In particular, the strata of $k$-differentials can be identified with certain configuration spaces of divisors on $\bC P^1$ and are connected, see \cite{Bo}.

A natural system of coordinates on the latter strata is given by the periods of $\root k \of \Psi$ with respect to the relative homology of $Y$ punctured at all poles of order smaller than or equal to $-k$ taken with respect to the set of all conical singularities (i.e. all zeroes together with all poles of order exceeding  $-k$). In the genus zero case, projective automorphisms allow us to assign positions of three singularities to $(0,1,\infty)$ and strata of $k$-differentials with $m$ singularities on the sphere are complex-analytic orbifolds of dimension of $m-2$. More details can be found in \cite{BCGGM}.

Local classification of singularities of differentials of order $k>2$  under the action of the group of local biholomorphisms is quite similar to that of quadratic differentials and can be found in Proposition~\ref{pr:local} below. 
   
   \medskip
  The main motivation of  this paper is as follows.  
Recall that  a quadratic differential on  a compact Riemann surface $Y$ is called \emph{Strebel}  if almost all its horizontal trajectories are closed. Such a phenomenon can never happen for a $k$-differential $\Psi$ of order $k\ge 3$ unless it is a power of a $1$-form or a quadratic differential, see Corollary~\ref{cor:quadr} below. Thus for $k\ge 3$,  in order to define something similar to a Strebel differential one should instead of using only smooth closed horizontal trajectories of $\Psi$ (which might exist in some domains of $Y$) allow a certain type of closed broken trajectories  and try to cover $Y$ with those. However, to avoid trivialities, one should impose substantial restrictions on the set where broken trajectories can switch from one smooth piece to another. An attempt to develop an appropriate notion is carried out below by introducing quasi-Strebel structures,  see Definition 10.
 
 \medskip
 The purpose of this paper is to develop elements of a general  theory of $k$-differentials which as much as possible resembles that of quadratic differentials and  to point out the connection of a certain class of $k$-differentials with the classical Heine-Stieltjes theory. It seems very plausible that special types of $k$-differentials should naturally appear in the asymptotic analysis of linear ODE of high order, comp. e.g. \cite{AKT}.   In what follows, theorems, conjectures, etc., labeled by letters, are borrowed from the existing literature, while those labeled by numbers are hopefully new.
 
  \medskip
 Our main result  (Theorem 1) claims the existence of a quasi-Strebel structure for every meromorphic $k$-differential with  admissible singularities if $k$ is even. A similar statement for  odd $k$, currently requires an additional (and very restrictive)  hypothesis of rationality of the periods of a differential. On the other hand, we strongly believe that for $k$ odd, there are arithmetic obstructions to the existence of a quasi-Strebel structure. 
 
  \smallskip
 The  paper is organized as follows. Section \S~\ref{sec:basic} contains main definitions and results related to $k$-differentials and quasi-Strebel structures. 
 Section \S~\ref{sec:HS} introduces a special class of  positive differentials and provides a short explanation how such differentials appear in the Heine-Stieltjes theory. Finally, in \S~\ref{sec:final} we present a number of relevant open questions.

\medskip
\noindent
\emph {Acknowledgements.} The first author is indebted to his friend and coauthor  Professor Y.~Baryshnikov of the University of Illinois at Urbana-Champaign for numerous discussions of this topic. He also wants to acknowledge the financial support of his research provided by the Swedish Research Council grant  2016-04416. 
The second author is sincerely grateful to the department of mathematics, Stockholm University for the hospitality in August and October 2018 as well as February 2020 when this project was carried out. He also acknowledges the support of the Israel Science Foundation (grant No 1167/17).

\section{Basic notions and the main theorem}\label{sec:basic}

\subsection{Trivium on $k$-differentials}
The next definitions are straightforward generalizations of those for quadratic differentials. 

\begin{defi+} The {\em canonical length element} on $Y$ associated with a $k$-differential $\Psi$ given locally as $\Psi=f(z) dz^k$ is defined by
$$|dw|=|f(z)|^{\frac{1}{k}} |dz|.$$
\end{defi+}

(Notice that the associated area element on $Y$ equals $|f(z)|^{\frac{2}{k}}dxdy$ where $z=x+iy$. Both $|dw|$ and the area element are globally well-defined.)

\begin{defi+}
a) The {\em norm} of a $k$-differential  $\Psi=f(z) dz^k$ is defined as
$$\iint_\Ga |f(z)|^{\frac{2}{k}}dxdy.$$
\noindent
(Again $|f(z)|^{\frac{2}{k}}dxdy$ is a globally well-defined $2$-form on $Y$.)

\noindent
b) If $\Psi=f(z) dz^k$  is meromorphic, then its singular point $p$ is called {\em finite or conical} if $p$ has a neighbourhood with a finite associated area.
\end{defi+}

\begin{rema+} Finite critical points of $\Psi$ are exactly its zeros and poles of order greater than $-k$ (i.e. conical singularities) and a meromorphic $k$-differential $\Psi$ has a finite norm if and only if all its critical points are finite.
 \end{rema+}

\begin{defi+}[comp. Definition 20.1 in \cite {Str}]  A trajectory of $\Psi$ is called {\em critical} if it starts or ends at a conical singularity.
\end{defi+}

\begin{defi+}
The {\em distinguished} or {\em natural} parameter $W$ associated with a $k$-differential $\Psi=f(z) dz^k$ is defined by
$$W=\int\root k \of {f(z)} dz$$
 for some branch of the $k$-th root.
\end{defi+}

\begin{rema+}
Obviously, a sufficiently small neighbourhood of any regular point $p$  is homeomorphically mapped by a branch of $W$ onto an open set in the $W$-plane. Notice that
$dW=\root k \of {f(z)} dz$ which implies $(dW)^k=f(z) dz^k$.
\end{rema+}

A standard basic result about $k$-differentials  which can be found in several sources is as follows. 

\begin{LEM} \label{lm:eulerk} The Euler characteristic of $(T_\bC^*Y)^{\otimes k}$ equals $\chi(Y) k$, where $\chi(Y)$ is the Euler characteristic of the underlying curve $Y$. Therefore, the difference between the number of poles and zeros (counted with multiplicity) of a meromorphic  differential $\Psi$ of order $k$ on $Y$ equals $\chi(Y) k$. In particular, the number of poles minus the number of zeros of any rational $k$-differential $\Psi$   equals $2k$.
\end{LEM}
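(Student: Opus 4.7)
The plan is to reduce the claim to the classical computation of the degree of the canonical line bundle. For any holomorphic line bundle $L$ on a compact Riemann surface $Y$ and any nonzero meromorphic section $s$, the divisor $(s)=\sum_p \operatorname{ord}_p(s)\cdot p$ has total degree equal to $\deg L$, where zeros contribute their positive multiplicity and poles contribute the negative of their multiplicity. Taking $L=(T^*_\bC Y)^{\otimes k}$ and $s=\Psi$, the lemma therefore reduces to showing that $\deg L = -k\chi(Y)$; combined with the paper's sign convention (zeros have positive order, poles have negative order), this is equivalent to the desired assertion that (number of poles) minus (number of zeros), counted with multiplicity, equals $k\chi(Y)$.

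The degree of the cotangent bundle I would compute via the Poincar\'e--Hopf theorem: the degree of the holomorphic tangent bundle $T_\bC Y$ equals the sum of indices of any meromorphic vector field on $Y$, which in turn equals $\chi(Y)$. Dualizing gives $\deg(T^*_\bC Y) = -\chi(Y) = 2g-2$, and since the degree is additive under tensor products of line bundles, $\deg((T^*_\bC Y)^{\otimes k}) = -k\chi(Y)$. Substituting into the reduction above completes the main assertion. For the rational case one specializes to $Y=\bC P^1$ with $\chi(Y)=2$, giving poles minus zeros equal to $2k$.

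I do not anticipate a genuine obstacle: the entire argument is a direct application of well-known facts about degrees of line bundles on compact Riemann surfaces. The one point worth flagging is terminology — the paper refers to the output $k\chi(Y)$ as the \emph{Euler characteristic} of $(T^*_\bC Y)^{\otimes k}$, which should be read as the Euler/Chern number packaged with the sign chosen to make poles count positively; equivalently, it is $(-1)$ times the first Chern number of $(T^*_\bC Y)^{\otimes k}$. Keeping track of this sign convention is the only subtlety in translating between the line-bundle identity $\deg((T^*_\bC Y)^{\otimes k}) = -k\chi(Y)$ and the statement of the lemma as written.
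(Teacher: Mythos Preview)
Your argument is correct and is exactly the standard textbook derivation. Note, however, that the paper does not supply its own proof of this lemma: it is one of the results ``labeled by letters'' which, as the authors explain in the introduction, are borrowed from the existing literature; the lemma is introduced as ``a standard basic result about $k$-differentials which can be found in several sources'' and is stated without proof. So there is no paper proof to compare against, and your write-up simply fills in the routine computation of $\deg\bigl((T^*_\bC Y)^{\otimes k}\bigr)=k(2g-2)=-k\chi(Y)$ via Poincar\'e--Hopf plus additivity of degree under tensor products. Your remark about the phrase ``Euler characteristic of $(T^*_\bC Y)^{\otimes k}$'' is also on point: read literally as a Chern/Euler number it would carry the opposite sign, and the statement only matches once one adopts the sign convention that makes poles count positively.
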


Next, following closely \S~6 of Ch.~3 in \cite {Str}, we provide normal forms for a $k$-differential near its critical points. (These results can be found in Proposition~3.1 of \cite{BCGGM} and were independently obtained about 10 years ago by the first author (unpublished).)

\begin{PROP}\label{pr:local} Given a $k$-differential $\Psi$ on a sufficiently small disk $V$ centered at $0$, denote by $m$ the order 
$\text{ord}_0  \Psi$. Then there exists a conformal map $\xi:(\Delta_R,0)\to (V,0)$ defined on a disk of sufficiently small radius $R$, and a number $r\in \bC$ such that 
$$\xi^*(\Psi)=\begin{cases} z^mdz^k\quad \text{if }m>-k \text{ or } k\not | m,\\
\frac{r}{z^k}dz^k \quad \text{if }m=-k,\\
\left(z^{m/k}+\frac{s}{z}\right)^kdz^k \quad \text{if }m<-k  \text{ and } k|m,
\end{cases}
$$
where in the last case $s\in \bC$ and $s^k=r$. The germ of $\xi$ is unique up to multiplication by a $(m+k)$-th root of unity when $m > -k$ or $k\not | m$, and up to multiplication by a non-zero constant if 
$m = -k$.
\end{PROP}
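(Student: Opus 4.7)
\medskip
\noindent\textbf{Proof plan.} My plan is to follow the classical normal-form argument for quadratic differentials (cf.\ Strebel, Ch.~3, \S 6). Writing $\Psi = f(\zeta)\,d\zeta^k = \zeta^m h(\zeta)\,d\zeta^k$ with $h(0)\neq 0$ in the original coordinate $\zeta$, I will look for the desired $\xi$ in the form $\xi(z) = z\,v(z)$ for a holomorphic unit $v$ near $0$; in each case the pullback condition $\xi^*\Psi = (\text{normal form})$ will reduce to an analytic functional equation for $v$ which I will solve by power series. Solvability at step $j\geq 0$ is controlled by the integer $m+(j+1)k$, which vanishes exactly when $m\in\{-k,-2k,\ldots\}$---this is precisely the split into the three cases of the proposition.

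\medskip
\noindent\emph{Cases 1 and 2.} In Case 1 ($m>-k$ or $k\nmid m$) no obstruction arises: the equation $v^m h(\xi)(v+zv')^k=1$ yields $v_0^{m+k}=1/h(0)$ with $m+k$ roots-of-unity worth of choices, and each higher $v_j$ is uniquely determined by a recursion with non-zero denominator $m+(j+1)k$. Convergence of the formal series will follow from Cauchy majorization (or from recasting the equation as a first-order analytic ODE with a removable singularity at $0$). In Case 2 ($m=-k$) the obstruction at $j=0$ is absorbed by the scalar $r$: comparison at $z=0$ forces $r=h(0)$, and after extracting the principal $k$-th root the residual equation takes the form $z(\log v)'=\phi(z)$ with $\phi$ holomorphic and $\phi(0)=0$; quadrature then produces a one-parameter family of solutions, matching the claimed uniqueness up to multiplication by a non-zero constant.

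\medskip
\noindent\emph{Case 3} ($m=-kn$ with $n\geq 2$) will be the heart of the argument, since the recursion breaks at $j=n-1$. The cleanest way to locate the obstruction will be via the multivalued primitive $G(\zeta)=\int_0^\zeta f(t)^{1/k}\,dt$: expanding $h(\zeta)^{1/k}=\sum_{j\geq 0}c_j\zeta^j$ for a chosen branch and integrating term-by-term gives $G(\zeta) = (\text{meromorphic polar part of order }n-1) + c_{n-1}\log\zeta + (\text{holomorphic})$. The candidate normal form has analogous primitive $F(z) = -z^{-(n-1)}/(n-1) + s\log z$, and any conformal $z=z(\zeta)$ fixing $0$ satisfies $\log z - \log\zeta$ holomorphic, so matching logarithmic coefficients will \emph{force} $s=c_{n-1}$, whence $s^k=:r$ is the claimed invariant. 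With $s$ fixed this way, I substitute $z=\zeta\lambda(\zeta)$ into $F(z)=G(\zeta)+\text{const}$, multiply by $\zeta^{n-1}$, and arrive at an equation of the form $\lambda^{1-n}+O(\zeta^{n-1})=c_0+O(\zeta)$; the implicit function theorem (the partial derivative with respect to $\lambda$ at $(\lambda_0,0)$ being $(1-n)\lambda_0^{-n}\neq 0$) then produces a unique holomorphic $\lambda$ with $\lambda(0)=c_0^{-1/(n-1)}$, and $z=\zeta\lambda(\zeta)$ is the required conformal germ.

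\medskip
\noindent\emph{Main obstacle and uniqueness.} The most delicate step will be the bookkeeping in Case 3, namely tracking which coefficients of the Laurent expansion of $h^{1/k}$ contribute to which terms of $G$, and checking that once the logarithmic coefficient has been absorbed into $s=c_{n-1}$ the residual meromorphic matching really is solved by a convergent conformal germ. The stated uniqueness in Cases 1 and 2 will follow by studying an automorphism $\eta=\xi_2^{-1}\circ\xi_1$ of the relevant normal form: in Case 1, $\eta(z)^m\eta'(z)^k=z^m$ gives $\eta(z)=cz$ with $c^{m+k}=1$ (higher coefficients of $\eta$ must vanish by the non-vanishing of $m+(j+1)k$ for $j\geq 1$), while Case 2 gives $\eta(z)=cz$ with $c\in\bC^\ast$ arbitrary.
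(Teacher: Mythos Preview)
The paper does not actually prove Proposition~A: by the authors' convention, results labeled by letters are quoted from the literature, and here they point to Proposition~3.1 of \cite{BCGGM} and to Strebel, Ch.~3, \S 6, for the quadratic prototype. Your plan is precisely the classical argument those references carry out: the power-series recursion with pivot $m+(j{+}1)k$ in Case~1, absorption of the leading coefficient into $r$ in Case~2, and in Case~3 the matching of distinguished parameters (primitives of a chosen branch of $\root k\of\Psi$) to isolate the logarithmic residue $s$, followed by the implicit function theorem. So there is no independent proof in the paper to compare against; your outline coincides with the cited one.

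Two small points on the execution. In Case~2 your function $\phi$ depends on $v$ through $h(zv)$, so the residual equation is a genuine analytic ODE $z(\log v)'=\phi(z,v)$ with $\phi(0,v)=0$ rather than a pure quadrature; existence and the one-parameter freedom in $v(0)$ follow just the same. In Case~3 the implicit function theorem yields a solution $\lambda$ for each choice of the integration constant and each $(n{-}1)$-th root $\lambda_0$; this non-uniqueness is consistent with the proposition, which asserts no uniqueness in that case.
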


\begin{figure}

\begin{center}
\includegraphics[scale=0.423]{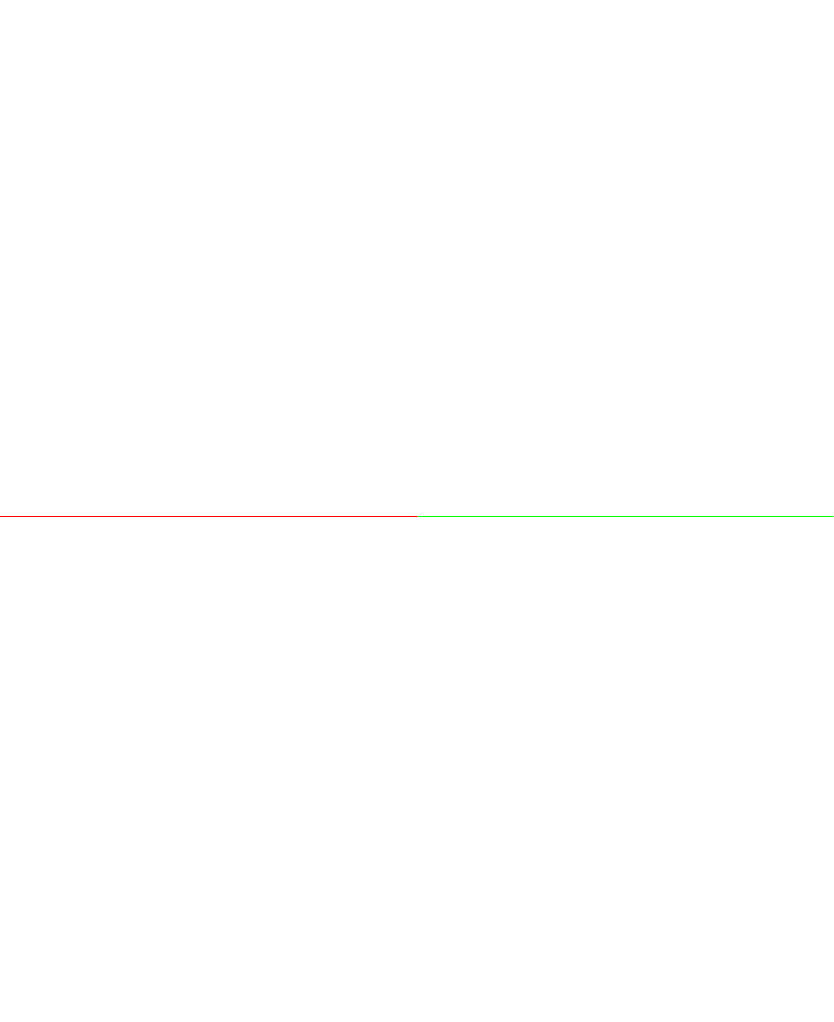} 
\includegraphics[scale=0.57]{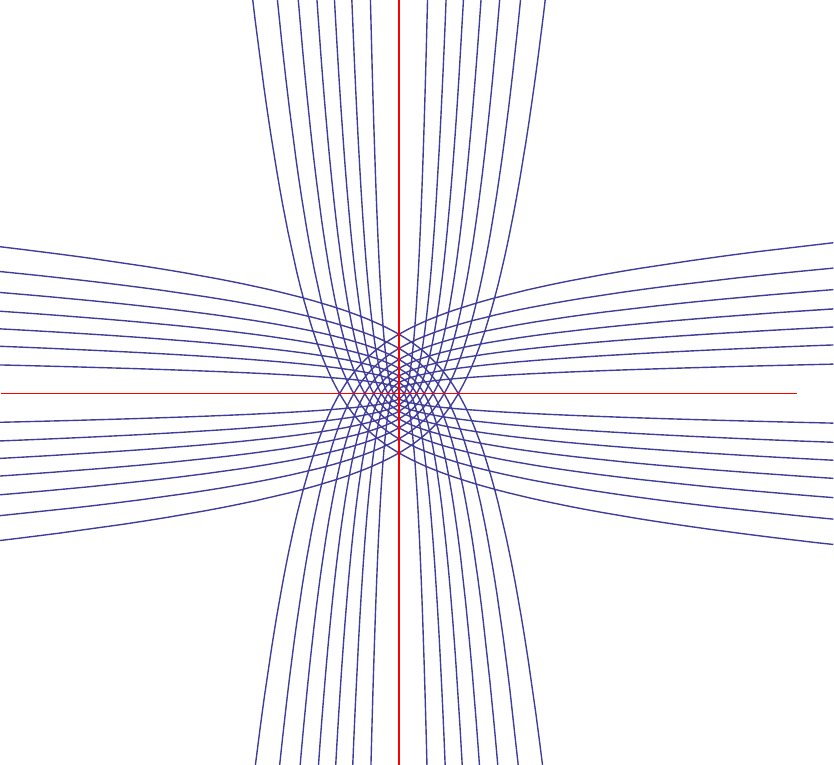} 
\includegraphics[scale=0.45]{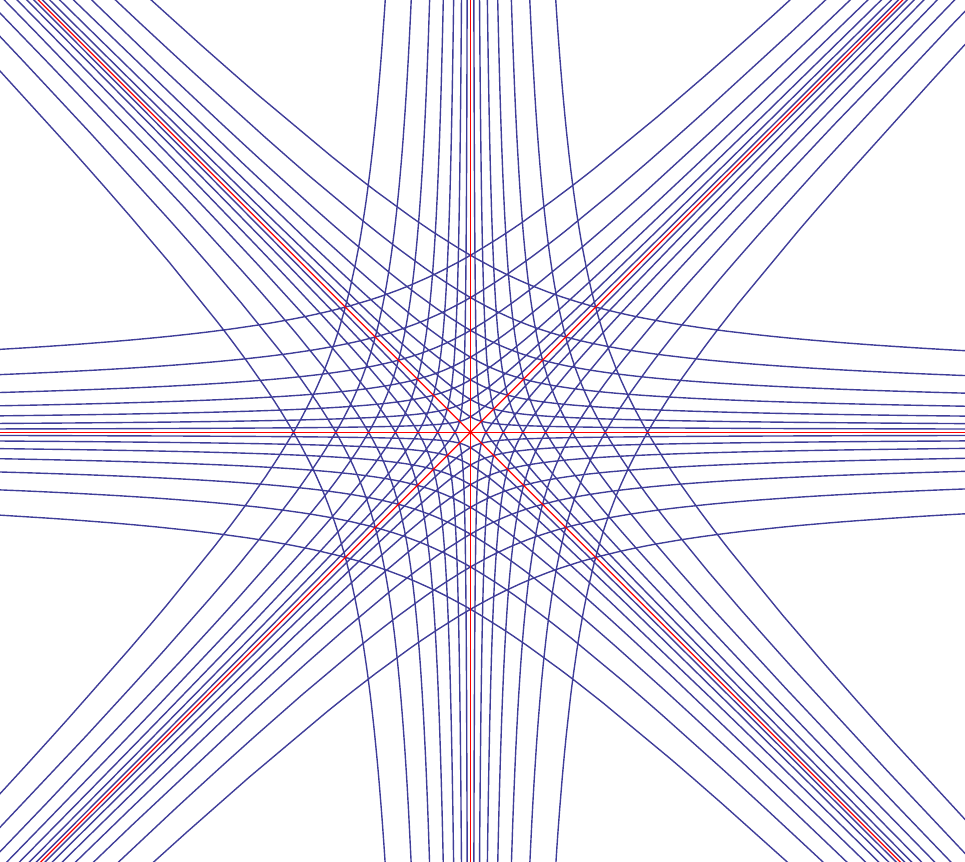}
\includegraphics[scale=0.55]{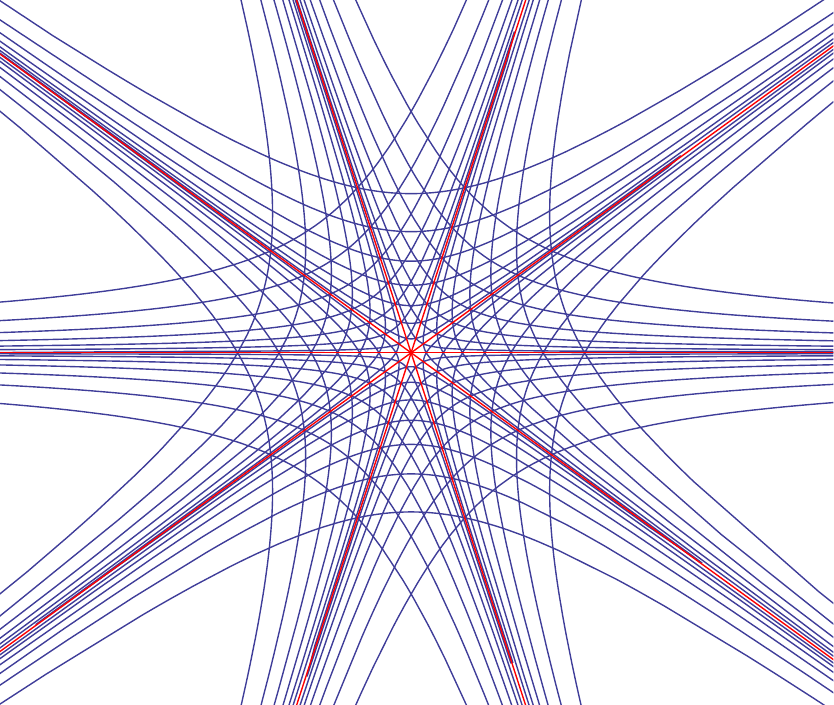}
\end{center}

\vskip 0.5cm

\caption {Local phase portraits of a cubic differential near a simple and double poles (top row) and simple and double zeros (bottom row).}
\label{fig1}
\end{figure}

  The following statement describes possible behavior of non-critical global trajectories, see \cite{Ta}, Proposition 5.5. 
  
  \begin{LEM}\label{lm:tahar}
 Non-critical trajectories of a $k$-differential $\Psi$ are of the following $3$ types: 

\smallskip
\noindent
(i) closed trajectories (with or without selfintersections);

\smallskip
\noindent
(ii) trajectories (with or without selfintersections)  one or both ends of which approach a pole of order smaller than or equal to $-k$; 

\smallskip
\noindent
(iii) trajectories (with or without  selfintersections) which are dense in some polygonal domains in the flat model of $\Psi$.
\end{LEM}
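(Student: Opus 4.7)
The plan is to analyze the asymptotic behavior of a non-critical trajectory $\gamma$ in the flat model of $\Psi$, combining the local normal forms of Proposition~\ref{pr:local} with a Poincar\'e--Bendixson-type argument adapted to flat surfaces.

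First I would catalogue the possible behaviors near each critical point. By Proposition~\ref{pr:local}, a conical singularity of order $m>-k$ is locally modeled by $z^{m}\,dz^{k}$, whose horizontal trajectories become, in the distinguished parameter $W=\int z^{m/k}\,dz$, horizontal lines in a branched sector; these pull back to curves emanating transversally from the singularity. Hence any non-critical $\gamma$ entering a sufficiently small punctured disk around a conical singularity must exit it in finite time and cannot accumulate there. Near a pole of order $-k$ the local form $r\,z^{-k}dz^{k}$ yields a half-infinite flat cylinder, while a pole of order $m<-k$ yields a flat cone with apex at infinity; in either case a trajectory entering such a neighborhood either winds periodically (in the cylinder case) or escapes along an infinite ray toward the cone point at infinity.

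This local classification allows me to dichotomize $\gamma$: either at least one end of $\gamma$ escapes into a pole of order $\le -k$, placing $\gamma$ in case~(ii), or $\gamma$ is forward- and backward-bounded inside a compact set $K\subset Y$ obtained by deleting small neighborhoods of all such poles. In the latter situation the closure $\overline{\gamma}\subset K$ is compact and invariant under the horizontal flow, so one can invoke a Poincar\'e--Bendixson argument: pick a short transversal arc $T$ to the horizontal foliation at a recurrent point of $\overline{\gamma}$. In the flat charts the first-return map to $T$ is a local isometry, hence a translation on $T$. Either this translation is periodic, in which case $\gamma$ closes up and we are in case~(i), or it has a dense orbit on a subinterval of $T$; saturating this subinterval under the flow produces an open set $U$ in which $\gamma$ is dense, and $\partial U$ is forced to consist of critical trajectories joining conical singularities, so $U$ is a polygonal domain in the flat model, giving case~(iii).

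The principal obstacle is that for a genuine $k$-differential the horizontal direction field is only defined up to the monodromy action of $\mathbb{Z}/k\mathbb{Z}$, so a globally coherent return map on $T$ and a consistent notion of ``polygonal domain'' are not automatic. I would resolve this either by noting that the continuous trajectory $\gamma$ itself selects a coherent branch of the direction field on a tubular neighborhood of $\overline{\gamma}$, so that the return construction is well-defined along $\gamma$; or, more robustly, by passing to the canonical $k$-sheeted branched cover on which $\root k \of {\Psi}$ is a single-valued abelian differential, invoking the classical Poincar\'e--Bendixson / minimal-component theorem for translation surfaces, and then projecting the resulting closed or minimal set back down to $Y$.
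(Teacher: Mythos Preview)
The paper does not supply its own proof of this lemma. It is stated as Lemma~B (a capital-letter label, which in this paper signals a result borrowed from the literature) and is simply attributed to \cite{Ta}, Proposition~5.5. So there is nothing in the paper to compare your argument against directly.

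Evaluated on its own, your plan is the standard and correct one: local normal forms to rule out accumulation at conical singularities, a dichotomy between escape to a pole of order $\le -k$ and recurrence in a compact piece, and then a transversal/first-return argument to split recurrent trajectories into periodic versus minimal. The resolution via the canonical $k$-sheeted cover on which $\root k\of{\Psi}$ is a single-valued abelian differential is exactly the right move, and reduces everything to the classical structure theory for translation surfaces; this is essentially what the cited reference does.

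One genuine weak spot: your \emph{first} proposed resolution of the monodromy problem does not work as stated. You claim that $\gamma$ ``selects a coherent branch of the direction field on a tubular neighborhood of $\overline{\gamma}$.'' But in case~(iii) the closure $\overline{\gamma}$ is a two-dimensional set (the whole density domain), and asking for a coherent branch of the horizontal direction on a neighborhood of it is precisely asking that the holonomy of that domain be trivial. Proposition~\ref{pr:density} of the present paper shows this can fail: the holonomy of a density domain may be $\{\pm 1\}$ when $k$ is even, so no global $1$-form branch exists there. Thus the first resolution is circular, and you should rely solely on the branched-cover argument. Relatedly, your assertion that the first-return map to $T$ is ``a translation'' is only literally true upstairs on the cover; on $Y$ itself it is an interval exchange possibly with a flip, which is still enough for the periodic/dense dichotomy but should be phrased accordingly.
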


 Observe that the analytic continuation of a branch of $\root k \of \Psi$ of a $k$-differential $\Psi$  along some loop on $Y$ may result in a different branch. The ratio between the values of these two branches is a $k$-th root of the unity called the \emph{holonomy} of the loop. This ratio describes the failure of the identification  of the circle of directions in the parallel transport along the loop. Observe that in particular, the holonomy of a loop around a singularity of order $n$ depends on the congruence class of $n$ modulo $k$. To formulate our next result we need the following notion.

\begin{defi+}
In the above notation,  the \emph {holonomy group of  $Y$} is the subgroup of $G_k\subset U(1)$ generated by the holonomies of all possible  loops on $Y$ where $G_k$ denotes the group of $k$-th roots of unity. Given a domain $\Omega \subseteq Y$, we define the holonomy of $\Omega$ as the subgroup of $G_k$ generated by the holonomies of loops contained in $\Omega$.\end{defi+}

\smallskip
By a  \emph{density domain} we denote the closure of a trajectory in case (iii) of Lemma B without selfintersections. The following claim seems to be new. 

\begin{proposition}\label{pr:density} The holonomy of a density domain is either trivial, i.e. consists of $\{1\}\in G_k$ or equals $\{\pm 1\}\subset G_k$ in which case $k$ is even. 
\end{proposition}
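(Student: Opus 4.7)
The plan is to show that the tangent line of the dense trajectory $\gamma$ extends, by continuity and density, to a globally defined continuous (unoriented) line field $\ell_\gamma$ on $D$, and then to observe that the existence of such a global line field forces the holonomy to lie in the subgroup of $G_k$ stabilising one horizontal line, which is precisely $\{\pm 1\}$ for $k$ even and $\{1\}$ for $k$ odd.

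At each point $p\in\gamma$ the tangent direction of $\gamma$ coincides, in any simply connected flat neighborhood $V$ of $p$, with one of the $k/2$ (if $k$ is even) or $k$ (if $k$ is odd) local horizontal line fields.  For $p\in D\setminus\gamma$ I define $\ell_\gamma(p)$ as the limit of the tangent lines at $q\in\gamma$ as $q\to p$.  The main step, and the place where the no-self-intersection hypothesis is used decisively, is to verify that this limit is independent of the approximating sequence.

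Suppose it is not.  Then two distinct local horizontal line fields $L_1,L_2$ on $V$ arise as tangent lines of arcs of $\gamma\cap V$ accumulating at $p$.  Take $V$ to be a small open disk, convex in the flat metric; then every arc of $\gamma\cap V$ is the complete chord in $V$ of its carrying horizontal leaf, since $\gamma$ rigidly follows that leaf until it exits $V$.  By density of $\gamma$ and the assumed multi-valued accumulation, I can pick arcs $\alpha_1,\alpha_2\subset\gamma\cap V$ tangent to $L_1,L_2$ respectively, whose carrying leaves pass arbitrarily close to $p$.  The two leaves are transverse straight lines in the flat chart, so they meet in a unique point $q^\ast$ whose position depends continuously on the two leaves.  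Shrinking the perpendicular distances of the chosen leaves to $p$ forces $q^\ast\to p$, hence $q^\ast\in V$ for arcs sufficiently close to $p$; being on each carrying leaf and in $V$, the point $q^\ast$ lies in both chords $\alpha_1$ and $\alpha_2$.  This produces a transverse self-intersection of $\gamma$, contradicting the defining hypothesis of a density domain.

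Once $\ell_\gamma$ is known to be a continuous line field on $D$, the conclusion is immediate.  The monodromy around any loop in $D$ rotates the $k$ local horizontal direction fields by the holonomy $\zeta=e^{2\pi i m/k}\in G_k$, inducing the permutation $L_j\mapsto L_{j+m}$ on local line fields, where the index is read modulo $k/2$ for $k$ even and modulo $k$ for $k$ odd.  Invariance of $\ell_\gamma$ under this action forces $m\equiv 0\pmod{k/2}$ if $k$ is even, i.e.\ $\zeta\in\{\pm1\}$, and $m\equiv 0\pmod{k}$ if $k$ is odd, i.e.\ $\zeta=1$, as asserted.
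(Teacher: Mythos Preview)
Your proof is correct and rests on the same underlying fact the paper uses: the dense non-selfintersecting trajectory $\gamma$ determines a single well-defined horizontal line field on the whole density domain, and a global line field forces the holonomy into $\{\pm 1\}$ (or $\{1\}$ when $k$ is odd).

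The execution differs somewhat from the paper's. The paper asserts, essentially without argument, that the closure of $\gamma$ is foliated by trajectories parallel to $\gamma$, then invokes the standard description of such a domain as a flat polygon with identified sides; since all sides are parallel to the foliation, each side-crossing loop has holonomy $\pm 1$, and these loops generate $\pi_1$. You instead justify the existence of the line field directly by a local transversality argument: two arcs of $\gamma$ in a small convex chart with distinct horizontal directions would be transverse chords and hence meet, contradicting embeddedness. You then read off the holonomy constraint from invariance of this line field, bypassing the polygon presentation entirely. Your route is more self-contained and makes explicit precisely where the no-self-intersection hypothesis enters; the paper's route gives a bit more structural information (the polygon model) at the cost of leaving the foliation claim unjustified.
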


\begin{remark} In the case when the holonomy group of a density domain is trivial the $k$-differential  $\Psi$ restricted to the density domain can be represented as the $k$-th power of a $1$-form.   In the case when the holonomy group of a density domain is $\{\pm 1\}$ then $k$ is even and $\Psi$ restricted to the density domain can be represented as the $\frac{k}{2}$-th power of a quadratic differential.  
\end{remark}

\begin{proof}
Consider a non-selfintersecting global horizontal trajectory $\gamma$  which is not periodic. Its closure $A$ is a subset of the surface $Y$   foliated by horizontal trajectories parallel to $\gamma$. Since $\gamma$ is non-selfintersecting these other trajectories are non-selfintersecting either. Therefore, $A$ is a polygonal domain on the flat model of $Y$ whose boundary is a union of segments of horizontal trajectories parallell to $\gamma$.
Thus $A$ is a flat polygon with some of its sides identified. 

Notice that such presentation is not unique. Nevertheless, the  angle between every side of a polygon and the foliation by the trajectories lines is well-defined. Therefore, the holonomy of any simple loop that crosses   exactly one side exactly once is either $1$ or  $-1$. Such loops generate the fundamental group of $A$ which finishes the proof. 
\end{proof}

\begin{corollary} \label{cor:quadr} If a meromorphic $k$-differential $\Psi$ on a compact Riemann surface $Y$ with $k\ge 3$  has a family of smooth periodic trajectories covering $Y$ almost everywhere then there either exists a $1$-form $\omega$ or a quadratic differential $q$ such that either $\Psi=\omega^k$ or $\Psi=q^{\frac {k}{2}}$. 
\end{corollary}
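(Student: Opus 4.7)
My plan is to imitate the strategy used in Proposition~\ref{pr:density}, with density domains replaced by cylinders of smooth closed trajectories. By hypothesis the smooth periodic trajectories cover $Y$ almost everywhere, so $Y$ decomposes into finitely many maximal open cylinders $C_1,\dots,C_n$ of closed horizontal trajectories whose complement is a critical graph made of conical singularities together with finitely many saddle connections.

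The first step is to check that the holonomy of each $C_i$ is trivial, so that $\Psi|_{C_i}=\omega_i^k$ for a single-valued meromorphic $1$-form $\omega_i$. Indeed $\pi_1(C_i)\cong\bZ$ is generated by the core closed trajectory, and a smooth closed trajectory returns to its starting point with the matching direction field, hence has trivial holonomy. Next, if $s$ is a saddle connection in $\partial C_i\cap\partial C_j$ (possibly $i=j$), then $\omega_i$ and $\omega_j$ extend continuously to $s$ and satisfy $\omega_i^k=\omega_j^k=\Psi$ there, so $\omega_i=\zeta_s\omega_j$ on $s$ for some constant $\zeta_s\in G_k$. The key claim is that $\zeta_s\in\{\pm 1\}$: parameterising $s$ by arclength $t$ as a horizontal trajectory gives $\Psi|_s=g(t)\,dt^k$ with $g(t)>0$ real, and in the distinguished flat chart of each cylinder the $1$-form $\omega_i|_s$ must equal $g(t)^{1/k}\,dt$ up to sign, the sign recording whether $t$ agrees with the orientation of $s$ induced by the flat structure of $C_i$.

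Once this is known the conclusion is immediate. For odd $k$ one has $\{\pm 1\}\cap G_k=\{1\}$, so $\zeta_s=1$ for every $s$ and the local $\omega_i$'s glue into a single meromorphic $1$-form $\omega$ on $Y\setminus Cr_\Psi$ with $\Psi=\omega^k$. For even $k$ the $\omega_i$'s may jump by $-1$ across saddle connections, but their squares agree, so $q:=\omega_i^2$ defines a global meromorphic quadratic differential on $Y\setminus Cr_\Psi$ with $q^{k/2}=\Psi$. Extension across the critical points is automatic: a small loop around a critical point picks up a product of signs $\zeta_s$, so the monodromy lies in $\{\pm 1\}$ for $\omega$ and in $\{1\}$ for $q=\omega^2$; thus $\omega$ (for odd $k$) or $q$ (for even $k$) is single-valued on a punctured neighbourhood of each critical point and inherits meromorphy from that of its $k$-th, resp.\ $(k/2)$-th, power.

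The step I expect to be the main technical obstacle is the identification $\zeta_s\in\{\pm 1\}$. One must keep careful track of the two orientations a saddle connection inherits from the flat charts of its two neighbouring cylinders; only this orientation discrepancy is permitted to produce a sign, and no other $k$-th root of unity can appear because the restriction of any $k$-th root of a positive real $k$-form to an embedded horizontal arc must be a real scalar multiple of $dt$. This is essentially the argument used in the proof of Proposition~\ref{pr:density} for density domains, transferred here to the simpler topology of a cylinder.
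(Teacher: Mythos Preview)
Your proof is correct. The underlying idea—the closed trajectories single out a global line field, so the holonomy of every loop lies in $\{\pm 1\}$—is exactly the paper's, but the paper dispenses with the cylinder decomposition and gluing altogether: it simply observes that the tangent direction to the family of periodic trajectories extends to a well-defined line field on all of $Y$, whence every loop has holonomy in $\{\pm 1\}$, and then reads off $\Psi=\omega^{k}$ (holonomy trivial) or $\Psi=q^{k/2}$ (holonomy $\{\pm 1\}$, $k$ even) directly from this. Your cylinder-by-cylinder construction is a faithful local unpacking of that one-line global argument; the step $\zeta_s\in\{\pm 1\}$ is precisely the local statement ``the line field matches across a saddle connection''. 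What your route buys is an explicit $\omega$ or $q$, at the cost of tracking the choices of $\omega_i$ and the extension across $Cr_\Psi$ separately.

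One phrasing to tighten: the sentence ``the restriction of any $k$-th root of a positive real $k$-form to an embedded horizontal arc must be a real scalar multiple of $dt$'' is not literally true for an arbitrary branch—an arbitrary $k$-th root is $e^{2\pi i j/k}g(t)^{1/k}\,dt$. What makes $\zeta_s\in\{\pm 1\}$ work is that you have implicitly chosen $\omega_i$ to be the branch that is real along the closed trajectories of $C_i$; since $s$ is a limit of those trajectories in the flat chart of $C_i$, continuity forces $\omega_i|_s$ to be real as well. Stating this choice explicitly would remove the only ambiguity in the argument.
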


\begin{proof}
If there is a family of smooth closed trajectories covering $Y$ almost everywhere, then the direction field of $\Psi$ tangent  to this family is well-defined on the whole surface $Y$. This implies that the holonomy of $\Psi$ along any closed loop on $Y$  equal to $\pm 1$.  
Assume first that the holonomy group of $\Psi$ is trivial. Then there exists a $1$-form $\omega $ on $Y$ such that $\Psi=\omega^k$. (Indeed, in this situation every branch of $\root k \of \Psi$ is a well-defined $1$-form on $Y$). In the remaining case the holonomy group of $\Psi$ is $\{\pm 1\}$ which implies that $k$ branches split into $\frac{k}{2}$ pairs where the two branches in each pair are analytic continuation of each other. Moreover the two branches in each pair coincide with the two branches of $\sqrt q$ for some quadratic differential $q$. Additionally, $\Psi=q^{\frac{k}{2}}$ independently of the choice of a pair. 
\end{proof}

\begin{defi+} For a $k$-differential $\Psi=f(z) dz^k,$ we call by its {\it dual} the differential $\Psi^*=i\cdot f(z) dz^k$, $i=\sqrt{-1}$. Obviously, $\Psi^*$ is globally well-defined  and attains purely imaginary values whenever $\Psi$ attains real values and vice versa. Additionally, $(\Psi^*)^*=-\Psi.$ 
\end{defi+}

\subsection{Introducing quasi-Strebel structures} 

The following definitions are inspired by the approach to Strebel quadratic differentials developed in \cite{BaSh} and by our previous results obtained in \cite{MaSh, BR, HoSh, STT}. 

\begin{defi+} Given a $k$-differential $\Psi$ on $Y$,  by its \emph{finite broken  trajectory}  we mean a continuous curve on $Y$ consisting of finitely many segments of trajectories of $\Psi$. A broken trajectory which is a non-selfintersecting closed curve on $Y$ is called \emph{closed}. Points where a trajectory is broken are called  \emph {switching points}. Usual periodic  trajectories (which might exist for $k$-differentials, but  can not cover almost all $Y$, see Corollary~\ref{cor:quadr}) is a particular case of closed broken trajectories.  
\end{defi+}

\begin{defi+}
Given a $k$-differential $\Psi$, we say that  a continuous and piecewise smooth function $\Phi:Y\setminus Cr_\Psi\to \bR$ is a \emph{prelevel function} of $\Psi$ if 

\smallskip
\noindent (i) $\Phi$ is non-constant on any open subset of $Y$; 

\smallskip
\noindent (ii) 
 all   connected components of its level curves are broken trajectories of $\Psi$. 
 
 \smallskip
  We say that a prelevel function $\Phi$ is a \emph{level function} of $\Psi$ if its \emph {switching set} $S_{\Psi,\Phi}$ (i.e., the set of all its switching points) consists of finitely many piecewise smooth compact curves on $Y$. 
  
  We say that two level functions $\Phi_1$ and $\Phi_2$ of $\Psi$ are \emph {level-equivalent} if every connected component of any level curve of $\Phi_1$ is a connected component of some level curve of $\Phi_2$ and conversely.
\end{defi+}





\begin{defi+}\label{def:strebel} Given a meromorphic  $k$-differential $\Psi$ defined on a compact Riemann surface $Y$ and a level function $\Phi:Y\setminus Cr_\Psi \to \bR$, we say that  $\Phi$ defines {\it a quasi-Strebel structure} for $\Psi$ (or that a pair $(\Psi,\Phi)$ is quasi-Strebel) if  

\smallskip
\noindent
(i)  $\Phi$ is a piecewise linear continuous function  consisting of  finitely many (local) linear functions in the charts of the flat model of $Y$;

\smallskip
\noindent
(ii) at each point where $\Psi$ is smooth  its differential  coincides with one of  the $k$ branches of $\text{Im }(\root k \of { \Psi})$;

\smallskip
\noindent
(iii)  almost all connected components of almost all level curves of $\Phi$ are closed broken trajectories of $\Psi$ and their  canonical lengths  are bounded from above.
\end{defi+}

\medskip
Level functions appearing in the Definition~\ref{def:strebel}  are called {\it piecewise standard} or {\it ps-level}  functions for short. Obviously the switching set of any ps-level function $\Phi$ coincides with the subset of $Y$ where $\Phi$ is not smooth and it consists of a finite collection of straight segments  on the flat model.  More exactly, the following result holds. 

\begin{lemma}\label{lm:directions}
The switching set $S_{\Psi,\Phi}$ of a ps-level function $\Phi$ of a quasi-Strebel structure $(\Psi,\Phi)$ consists of finitely many segments of trajectories of $\Psi$ and its dual $\Psi^*$
\end{lemma}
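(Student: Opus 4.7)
The plan is to reduce the claim to a local computation in a flat chart of $\Psi$. Fix a regular point $p$ in the interior of some switching segment and a flat chart $w$ with $\Psi = dw^k$ centered at $p$. By condition (i) of Definition~\ref{def:strebel}, a neighbourhood of $p$ splits into two open regions on each of which $\Phi$ is linear in the flat coordinate $w$; by (ii), each linear piece has the form $\Phi_j(w) = \text{Im}(\zeta_j w) + c_j$ for some $k$-th root of unity $\zeta_j = e^{2\pi i a_j/k}$ and some real constant $c_j$, with $\zeta_1 \neq \zeta_2$. Continuity of $\Phi$ at $p$ then forces the switching segment to lie on the zero set of $\Phi_1 - \Phi_2$, i.e., on the straight line $\text{Im}\bigl((\zeta_1 - \zeta_2)w\bigr) = c_2 - c_1$ in this chart.

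I would next identify the direction of this line. The factorisation
\[
\zeta_1 - \zeta_2 \;=\; 2i\sin\!\Bigl(\tfrac{\pi(a_1 - a_2)}{k}\Bigr)\, e^{i\pi(a_1 + a_2)/k}
\]
gives $\arg(\zeta_1 - \zeta_2) \equiv \tfrac{\pi}{2} + \tfrac{\pi(a_1+a_2)}{k} \pmod{\pi}$, so the switching segment is tangent (as an unoriented line in the flat chart) to $\xi := e^{-i\pi/2 - i\pi(a_1+a_2)/k}$, and hence
\[
\xi^k \;=\; e^{-ik\pi/2}\,(-1)^{a_1+a_2}.
\]
This number lies in $i\bR$ when $k$ is odd and in $\bR$ when $k$ is even. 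Since, by the very definition of horizontal directions of a $k$-differential, a direction $\xi$ is tangent to a trajectory of $\Psi$ iff $\xi^k \in \bR_{+}$ and to a trajectory of $\Psi^* = i\Psi$ iff $\xi^k \in -i\bR_{+}$, the switching segment coincides with (a segment of) a trajectory of $\Psi^*$ when $k$ is odd and of $\Psi$ when $k$ is even.

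Finiteness of the decomposition of $S_{\Psi,\Phi}$ into segments is then immediate from condition (i) of Definition~\ref{def:strebel}, which stipulates that $\Phi$ consists of only finitely many linear pieces. The principal subtlety I foresee is the sign bookkeeping in the formula $\xi^k = e^{-ik\pi/2}(-1)^{a_1+a_2}$: in the even-$k$ case, when $\xi^k = -1$, the direction $\xi$ is literally tangent to a trajectory of $-\Psi$ rather than of $\Psi$ or $\Psi^*$, and matching the statement of the lemma in this case requires either treating horizontal trajectories as unoriented line fields in the flat model (so that a single line carries a trajectory of $\Psi$ in one orientation and of $-\Psi$ in the other) or appealing to condition (iii) of Definition~\ref{def:strebel} on closedness of level curves to exclude the pairs $(\zeta_1, \zeta_2)$ that produce $\xi^k = -1$.
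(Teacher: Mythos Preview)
Your approach is the same as the paper's: pick a flat chart at a generic switching point, write the two linear pieces as $\text{Im}(\zeta_j w)+c_j$ with $\zeta_j^k=1$, and read off the direction of the line $\text{Im}((\zeta_1-\zeta_2)w)=\text{const}$. The paper phrases this as ``the bisector of two $k$-th roots of unity'' and stops there; you carry the computation further and obtain the parity-dependent refinement $\xi^k\in i\bR$ for $k$ odd, $\xi^k\in\bR$ for $k$ even. That refinement is correct and is sharper than what the paper records.

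Where your argument needs care is exactly the point you flag at the end, and neither of your proposed fixes works. For even $k$ the condition $\delta^k>0$ is invariant under $\delta\mapsto -\delta$, so passing to unoriented lines does \emph{not} merge the trajectories of $\Psi$ and $-\Psi$; a direction with $\xi^k=-1$ is genuinely a $(-\Psi)$-trajectory, distinct from any horizontal line of $\Psi$ or of $\Psi^*$. Nor does condition~(iii) exclude such pairs $(\zeta_1,\zeta_2)$: in the paper's own examples (e.g.\ the bisector segments inside the triangles of Figure~\ref{figTriangle}, or the diagonal pieces in Figure~\ref{fig:many} for $k=4$) the switching set visibly contains segments at angle $\pi/4$ to the horizontal directions, where $\xi^4=-1$. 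So the honest conclusion of your computation is: for $k$ odd the switching segments are trajectories of $\Psi^*$, while for $k$ even they are trajectories of $\Psi$ or of $-\Psi$ (equivalently, of $(\Psi^*)^*$). The paper's proof glosses over this distinction in the same way; what you have uncovered is a slight imprecision in the lemma's statement rather than a flaw in your reasoning.
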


\begin{proof} Indeed consider a generic point $p$ of $S_{\Psi,\Phi}$. By the properties of the differential of $\Phi$, in the flat model of $\Psi$, $p$ lies on the line defined by the equation $l_1(W)=l_2(W)$, where $W$ is a local canonical coordinate   centered at $p$ and $l_1(W):=\text{Im}\; \left(\zeta_1 W\right)$ and $l_2(W):=\text{Im}\; \left(\zeta_2  W\right)$ are the linear functions with $\zeta_1^k=\zeta_2^k=1$. Any such line goes in the direction determined by the bisector of two $k$-th roots of unity and these are exactly the directions tangent to the trajectories of either $\Psi$ or $\Psi^\ast$ at the point $p$. 
\end{proof} 

\begin{remark}  Recall that given a $k$-differential $\Psi$,  its two ps-level functions $\Phi_1$ and $\Phi_2$ are {\it level-equivalent} if they have coinciding connected components of their level sets.  Since we are only interested in the behavior of the family of closed broken trajectories obtained as  connected components of the level curves or $\Phi_1$ (resp. $\Phi_2$) we can identify the quasi-Strebel structures $(\Psi,\Phi_1)$ and   $(\Psi,\Phi_2)$.  Thus  denoting by $[ \Phi]$ the class of all ps-level functions equivalent to $\Phi$, from now on we will  say that a  {\it quasi-Strebel structure} is a pair $(\Psi,[ \Phi])$. 
\end{remark}

\begin{defi+}\label{def:imp} Given a meromorphic $k$-differential $\Psi$ as above, we say that it has \emph{ admissible singularities} if 

\noindent
(i) $\Psi$ has no poles of order smaller than  $-k$;

\noindent
 (ii) at every pole of order $-k$ of $\Psi$  the residue should be equal to $i^k\cdot a$ where $a\in \bR$. 

\end{defi+} 

The motivation for the above definition is as follows.

\begin{lemma}\label{lm:poles}
If $\Psi$ is a $k$-differential admitting a quasi-Strebel structure $(\Psi,[ \Phi])$, then it has admissible singularities. 
\end{lemma}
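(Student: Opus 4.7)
The plan is a proof by contradiction. Fix a quasi-Strebel structure $(\Psi,[\Phi])$ and denote by $L>0$ the uniform upper bound on the canonical lengths of the closed broken trajectories guaranteed by condition~(iii) of Definition~\ref{def:strebel}. Let $S=S_{\Psi,\Phi}$ be its switching set, a finite union of piecewise smooth compact curves on $Y$. Suppose, for contradiction, that $\Psi$ has a pole $p$ which is not admissible: either (i) $p$ has order $m<-k$, or (ii) $p$ has order exactly $-k$ with residue $r\notin i^{k}\bR$. My goal is to produce a simply connected subdomain $B$ of the flat model, lying in a small punctured $Y$-neighborhood of $p$, disjoint from $S$, and containing a nonempty open subset $B'$ each of whose points $q$ has its entire closed flat ball of radius $L$ contained in $B$. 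Once such a $B$ is constructed, the argument closes instantly: for any $q\in B'$, the closed broken trajectory of length $\le L$ through $q$ sits inside $B$, encounters no switching point, and is therefore a single smooth $\Psi$-trajectory segment, i.e.\ a straight segment in the canonical coordinate $W$, which by the local analysis below cannot be closed, contradicting the quasi-Strebel hypothesis.

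To run the local analysis, I first invoke Proposition~\ref{pr:local} to put $\Psi$ in its normal form in a small disk around $p$. In case~(i) the flat model is a flat cone of positive total angle $-(m+k)\tfrac{2\pi}{k}$ whose apex sits at flat infinity; the smooth $\Psi$-trajectories are straight segments in the natural coordinate $W$, so no such segment can close up inside the simply connected cone. In case~(ii) one has $\Psi=(r/z^{k})dz^{k}$ and $W=r^{1/k}\log z$, which presents the flat model as a flat half-infinite cylinder on which, in the coordinates $(\tilde{\rho},\theta)=(\log|z|,\arg z)$, the horizontal $\Psi$-trajectories satisfy $\mathrm{Im}(r^{1/k})\,\tilde{\rho}+\mathrm{Re}(r^{1/k})\,\theta=\mathrm{const}$. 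The hypothesis $r\notin i^{k}\bR$ forces $\mathrm{Re}(r^{1/k})\ne 0$ on every branch, so these trajectories are honest spirals of nonzero slope which never close up on the cylinder. Thus in both cases a smooth $\Psi$-trajectory segment in any simply connected flat region sitting deep near $p$ cannot be a closed curve.

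The crux is the construction of $B$. Since $S$ is compact on $Y$, it meets any small punctured disk about $p$ in only finitely many strands. In case~(i) the flat cone has annuli of linearly growing flat length at large flat radius, so they easily contain flat disks of diameter $2L+2$ disjoint from those finitely many strands; any such disk serves as $B$. In case~(ii) a flat disk of that diameter need not embed, because $L$ may exceed the flat circumference of the cylinder. Instead I would take $B$ to be a long narrow strip: a circumferential sub-arc chosen in a gap between the finitely many strands of $S$ entering the cylinder, crossed with a sufficiently long $\tilde{\rho}$-interval reaching deep towards $p$. The uniform drift per revolution of the spirals, coming from $\mathrm{Re}(r^{1/k})\ne 0$, bounds how far a trajectory piece of length $\le L$ can wind circumferentially, which lets me choose the strip long enough in the $\tilde{\rho}$-direction so that trajectories of length $\le L$ starting in its middle cannot escape through its circumferential sides. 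The genuinely delicate step I expect to wrestle with is exactly this narrow-strip construction in case~(ii); the cone construction in case~(i) is essentially routine once the local flat model is understood.
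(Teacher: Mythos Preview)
Your overall strategy is sound, but you are missing a simple observation that both collapses the argument to the paper's short proof and repairs the gap in your case~(ii). The switching set $S$ is a finite union of compact curves on $Y$, hence closed, and by Lemma~\ref{lm:directions} it is made up of segments of trajectories of $\Psi$ and $\Psi^*$; such segments have finite canonical length, whereas a pole $p$ of order $\le -k$ lies at infinite flat distance. Hence $p\notin S$, and an \emph{entire} punctured neighborhood $V$ of $p$ is disjoint from $S$ --- there are no ``finitely many strands'' to avoid. The paper's proof then finishes in one stroke: inside $V$ each level curve of $\Phi$ is a smooth horizontal trajectory; by the local models of Proposition~\ref{pr:local} such a trajectory is either periodic around $p$ (which forces order $-k$ and residue in $i^{k}\bR$) or tends to $p$ and therefore has infinite canonical length, contradicting condition~(iii).

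As written, your case~(ii) strip construction does not work. The long boundary components of a narrow strip are the sides $\theta=\text{const}$, and a horizontal trajectory whose branch satisfies $|\mathrm{Im}(r^{1/k})|\gg|\mathrm{Re}(r^{1/k})|$ moves almost purely in the $\theta$--direction and exits through those sides after arbitrarily short flat length, no matter how long you make the strip in $\tilde\rho$. The ``drift per revolution'' controls the $\tilde\rho$--displacement per turn, not the circumferential winding, so it cannot prevent this escape. The only way to guarantee that a level curve of length $\le L$ through a deep point avoids $S$ is to take the full annulus $\{\tilde\rho<\tilde\rho_0\}$ --- but that is precisely the neighborhood $V$ above, and it is not simply connected, so you must abandon the ``straight segment cannot close'' argument in favor of the direct computation you already made: on the cylinder every smooth horizontal trajectory is a spiral with nonzero $\tilde\rho$--drift, hence never closed. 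Once you make that replacement you have essentially recovered the paper's proof.
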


\begin{proof} Indeed, if $\Psi$ has a pole of order smaller than or equal to  $-k$, there exists a neighborhood $V$ of this pole disjoint from the switching set $S_{\Psi,\Phi}$. If one of the level sets of $\Phi$ inside $V$ is a periodic trajectory, then the pole has to be of order $-k$ with the 
residue of the form  $i^k\cdot a$ where $a\in \bR$. Otherwise there exists a level set whose intersection with $V$ connects the boundary of $V$ with the pole and therefore is of infinite length - contradiction. 
\end{proof} 



 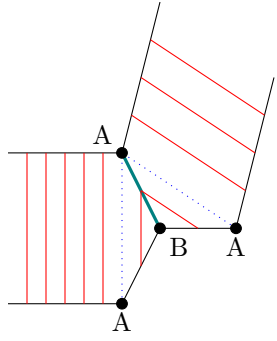
\begin{figure}



\begin{tikzpicture}
\draw (0,0) --(1.5,0);
\draw (0,2) --(1.5,2);
\draw [teal, very thick] (1.5,2) --(2,1);
\draw (2,1) --(3,1);
\draw (2,1) --(1.5,0);
\draw (1.5,2) --(2,4);
\draw (3,1) --(3.5,3);
\draw [blue, dotted] (1.5,0) --(1.5,2);
\draw [blue, dotted] (3,1) --(1.5,2);
\draw [red] (1.25,0) --(1.25,2);
\draw [red] (1,0) --(1,2);
\draw [red] (0.75,0) --(0.75,2);
\draw [red] (0.5,0) --(0.5,2);
\draw [red] (0.25,0) --(0.25,2);
\draw [red] (1.75,0.5) --(1.75,1.5);
\draw [red] (2.5,1) --(1.75,1.5);

\draw [red] (3.25,2) --(1.75,3);
\draw [red] (3.125,1.5) --(1.625,2.5);
\draw [red] (3.375,2.5) --(1.875,3.5);

\filldraw[black] (1.5,0) circle (2pt) node[anchor=north] {A};
\filldraw[black] (2,1) circle (2pt) node[anchor=north west] {B};
\filldraw[black] (3,1) circle (2pt) node[anchor=north] {A};
\filldraw[black] (1.5,2) circle (2pt) node[anchor=south east] {A};

\end{tikzpicture}

\caption {Cubic differential with one zero of order $2$ (marked by $A$), one pole of order $-2$ (marked by $B$) and two poles of order $-3$ (at the ends of two cylinders). The thick segment connecting $A$ and $B$ is the switching set of the quasi-Strebel structure whose broken trajectories are shown by the parallel lines. Finally, the dashed  lines are the special broken trajectories separating the cylinders.}
\label{figRes}
\end{figure}

\begin{defi+}
Given a ps-level function $\Phi$ for a  differential $\Psi$, we say that  a point $p$ in the switching set $Sw_{\Phi}$ is its {\it node}  if  an arbitrary small neighborhood of $p$   in $Sw_{\Phi}$ is not a straight segment without boundary. (In other words, a node is a vertex of $Sw_{\Phi}$  considered as a graph embedded in $Y$.)  In the above notation, connected components of the level curves of $\Phi$ containing nodes 
will be called {\it special}. 
\end{defi+}

One type of nodes is described by the following lemma. 

\begin{lemma}\label{lm:skeleton}
For any quasi-Strebel structure $(\Psi, \Phi)$, each vertex of valency one of  $ Sw_{\Phi}$ (which is considered as an embedded graph on $Y$) is a singular point of $\Psi$ whose order  for $k$ is odd, is not divisible by $k$ and  for $k$  even, not divisible by $\frac{k}{2}$.
Moreover, every singular point of $\Psi$ whose order is not an integer multiple of $\frac{k}{2}$ is a node of $ Sw_{\Phi}$. 
\end{lemma}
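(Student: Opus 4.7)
The plan is to analyze $\Phi$ locally at a putative valency-one vertex $p$ of $Sw_\Phi$, exploiting the interplay between the monodromy of $\sqrt[k]{\Psi}$ around $p$ and the piecewise-linearity in flat coordinates imposed by Definition~\ref{def:strebel}. First I will show that $p$ cannot be a regular point. Let $s$ be the unique edge of $Sw_\Phi$ incident to $p$ and let $V$ be a small disk at $p$. If $p$ were regular, the flat coordinate $W$ would be single-valued on $V$ and the slit disk $V\setminus s$ simply connected with no internal switching. By Definition~\ref{def:strebel}(ii) and connectedness, $d\Phi=\mathrm{Im}(\zeta_0\,dW)$ throughout $V\setminus s$ for one fixed $k$-th root of unity $\zeta_0$; since $W$ has no monodromy, this same expression applies continuously on both sides of $s$, so $d\Phi$ is in fact continuous across $s$ and $s$ cannot lie in $Sw_\Phi$---a contradiction.

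Now assume $p$ is singular of order $m$. By Proposition~\ref{pr:local} I may take $\Psi=z^m dz^k$ in a local chart, so the monodromy of $\sqrt[k]{\Psi}$ around $p$ is $\mu:=e^{2\pi im/k}$. On the slit disk with a fixed branch of $W$, continuity of $\Phi$ across $s$ forces $\mathrm{Im}(\zeta_0(1-\mu)W)\equiv 0$ along $s$. The case $\mu=1$ (i.e.\ $k\mid m$) again makes $d\Phi$ coincide from both sides of $s$, so $s\notin Sw_\Phi$; hence $k\nmid m$. For $k$ odd this is the full conclusion, since $\mu=-1$ would require the noninteger $m=k/2$. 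For $k$ even I must further exclude $\mu=-1$, i.e.\ $m$ an odd multiple of $k/2$. In that case the cone at $p$ has total angle $(m+k)\cdot 2\pi/k$, an odd multiple of $\pi$, and the branch change across $s$ is the sign flip $\sqrt[k]{\Psi}\mapsto-\sqrt[k]{\Psi}$. By Lemma~\ref{lm:directions}, $s$ lies along a trajectory of $\Psi$ or $\Psi^*$; tracking the slit-disk branch of $\sqrt[k]{\Psi}$ along the cone-geodesic continuation of this trajectory through $p$ (to its antipodal ray in the cone) shows that $d\Phi$ sign-flips across that continuation as well. Hence the continuation is another edge of $Sw_\Phi$ incident to $p$, raising the valency to at least two---a contradiction. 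Thus $\mu\ne-1$, and combined with $\mu\ne 1$ we obtain $(k/2)\nmid m$.

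For the ``moreover'' direction, let $p$ be singular with $(k/2)\nmid m$, equivalently $\mu^2\ne 1$. Because $\mu\ne 1$, the branch of $\sqrt[k]{\Psi}$ used by $\Phi$ cannot close up around $p$ without some switching, so $p\in Sw_\Phi$. If $p$ were not a node, the switching set near $p$ would reduce to a single locally straight cone-geodesic arc through $p$, and the monodromy bookkeeping of the previous paragraph would force the cumulative switching multiplier at $p$ to be $\pm 1$---impossible since $\mu\ne\pm 1$. Hence $p$ is a node. The hard part will be the geometric step in the second paragraph: one must justify rigorously that, for $\mu=-1$, the cone-geodesic continuation of $s$ through $p$ is indeed an edge of $Sw_\Phi$, which requires carefully comparing the principal-branch expressions of $\sqrt[k]{\Psi}$ on antipodal rays inside a cone of odd-$\pi$ total angle, using the explicit normal form of Proposition~\ref{pr:local} together with Lemma~\ref{lm:directions}.
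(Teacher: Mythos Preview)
Your overall strategy---tracking the monodromy of $\sqrt[k]{\Psi}$ around the putative vertex and matching it against the jumps of the branch $\zeta$ used by $\Phi$---is exactly the paper's holonomy argument, just phrased analytically rather than geometrically. The first paragraph (ruling out regular points) and the deduction $k\nmid m$ are fine and parallel the paper.

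The genuine gap is your exclusion of $\mu=-1$. Your claim that the ``cone-geodesic continuation of $s$ through $p$'' is a second edge of $Sw_\Phi$ is not merely unjustified; it is false. Take $k=4$, $m=2$, so the cone has total angle $3\pi$ and $\mu=-1$. With the slit along $s=\{\phi=0\equiv 3\pi\}$ and $\zeta_0=1$, the function $\Phi=r\sin\phi$ on the slit cone is a perfectly good local ps-level function: it is linear in $W$, its level curves are horizontal trajectories, and it is continuous across $s$ (both sides give $\Phi=0$ there). Yet $\Phi$ is smooth on the entire open slit cone, so $Sw_\Phi$ near $p$ is \emph{exactly} the single ray $s$; at the candidate ``antipodal'' ray $\phi=\pi$ (or any other $\phi$) one checks directly that $d\Phi$ is continuous, with no sign flip. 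Thus your mechanism for producing a second edge does not work, and the step you flagged as ``the hard part'' cannot be completed along the lines you sketch.

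The paper's argument at this point is different in spirit: instead of manufacturing a second edge, it asserts that a nearby level curve crosses $s$ with a genuine kink of angle $\theta\in(0,\pi)$, and then builds a loop from trajectory segments and half-circles whose holonomy is therefore not in $\{\pm 1\}$. Note that this argument presupposes that level curves meet $s$ transversally; in the degenerate situation above (where $s$ is itself a horizontal trajectory and the level curves are parallel to it) the ``kink'' picture is unavailable, so the paper's proof is also rather terse here. In any case, your antipodal-ray argument is not a valid substitute. For the ``moreover'' clause, your reduction again leans on ``the previous paragraph'', so it inherits the same defect; the paper instead argues directly that if $p\notin Sw_\Phi$ then the holonomy around $p$ lies in $\{\pm 1\}$, and (implicitly) that any conical singularity lying on $Sw_\Phi$ is automatically a node since a straight segment cannot pass through a cone point.
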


(Obviously, vertices of valency one are nodes of $Sw_{\Phi}$). 

\begin{proof}�
Around a vertex of valency one of the switching set, the level curves are formed by some segments of  smooth trajectories (possibly none) and one broken trajectory with a switching point of angle $\theta$, $0 < \theta < \pi$. Connecting these segments of trajectories by half circles, we get a loop around the vertex whose holonomy does not belong to $\{\pm 1\}$. Therefore, the vertex is a conical singularity whose total angle is not a multiple of $\pi$. Therefore its order is neither divisible by $k$ nor by $\frac{k}{2}$ for $k$  even.

Now if a conical singularity does not belong to $ Sw_{\Phi}$, then the level curves around it are formed only by some segments of  smooth trajectories. Connecting as above these segments of trajectories by half circles, we get a loop around the vertex whose holonomy  belongs  to $\{\pm 1\}$.      
Thus the order of the conical singularity is  an integer multiple of $\frac{k}{2}$.
\end{proof}

\begin{rema+} 
Notice that 
 for $k>2$,  there usually exist nodes which are not the singular points of $\Psi$, comp. Fig.~\ref{fig22}. We will call them {\it secondary singularities.}  �(These nodes are similar to additional turning points appearing in the asymptotic theory of linear ODE of order exceeding 2, see \cite{AKT}.) 
\end{rema+}

\subsection{Main theorem}\label{sec:general}


\begin{theorem}\label{th:main}
{\rm (i)}\,�For even $k>2$, every $k$-differential $\Psi$ with admissible singularities has a quasi-Strebel structure.

\noindent
{\rm (ii)}\,�For odd $k>2$,  any  $k$-differential $\Psi$ with admissible singularities  such that, up to a common factor, the period of $\root k \of \Psi$ along every path connecting any two conical singularities of $\Psi$  belongs  to $\QQ[e^{\frac{2i\pi}{k}}]$ has a quasi-Strebel structure.
\end{theorem}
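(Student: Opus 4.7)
The plan is to construct a ps-level function $\Phi$ explicitly by triangulating $Y$ with saddle connections and defining $\Phi$ on each triangle as one of the $k$ possible linear functions $\text{Im}(\zeta W)$ with $\zeta^k=1$ in the natural coordinate $W$.

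First, I would truncate each pole of order $-k$: by Proposition~\ref{pr:local} together with the admissibility condition of Definition~\ref{def:imp}, each such pole has a neighborhood isometric to a half-infinite flat cylinder, and cutting along a closed horizontal trajectory and setting $\Phi=\text{Im}(W)$ on the truncated cylinder produces closed horizontal level curves with empty switching set. The resulting compact surface $Y_0$ then has only conical singularities in the sense of Definition~\ref{def:imp}.

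Next, on $Y_0$ I would choose a triangulation whose edges are saddle connections of $\Psi$ and, if needed, of $\Psi^*$; every compact flat surface with only conical singularities admits such a decomposition, and by Lemma~\ref{lm:tahar} the complementary open regions are maximal cylinders and density domains which can be further subdivided. On each triangle $T$ with natural coordinate $W_T$, I would set $\Phi|_T := \text{Im}(\zeta_T W_T)$ for some $k$-th root of unity $\zeta_T$. By (the argument of) Lemma~\ref{lm:directions}, continuity of $\Phi$ across a shared edge $e$ between $T_1$ and $T_2$ with transition $W_{T_2}=\epsilon W_{T_1}+c$ (where $\epsilon^k=1$) is equivalent to $e$ being parallel to the bisector of the unit vectors $\zeta_{T_1}$ and $\zeta_{T_2}\epsilon$; this holds automatically whenever $\zeta_{T_2}\epsilon \in \{\zeta_{T_1},\, -\zeta_{T_1}\}$, the latter option being available only when $k$ is even.

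The crux is the global matching of the local choices $\zeta_T$. Around each conical singularity, the product of the edge-transitions $\epsilon$ realizes the local holonomy, which lies in $G_k$, and Proposition~\ref{pr:density} constrains the holonomy of each density domain to $\{\pm 1\}$. For even $k$, the freedom $\zeta\mapsto -\zeta$ (since $-1\in G_k$) at every edge suffices to resolve any inconsistency in the cycle condition around each vertex, yielding a global assignment for any admissible $\Psi$; additionally, in density domains where $\Psi$ restricts to $q^{k/2}$ for a quadratic differential $q$, the construction reduces to the (elementary) case of real linear functions on a flat polygon. For odd $k$, this sign flexibility is absent, only the trivial-holonomy case $\Psi|_D=\omega^k$ occurs in density domains, and the compatibility at each vertex becomes a system of linear equations in $\QQ[e^{2\pi i/k}]$ on the relative periods of $\root k \of \Psi$; this system is solvable precisely when those periods lie in $\QQ[e^{2\pi i/k}]$ up to a common factor, which is exactly the hypothesis in (ii).

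Once such a $\Phi$ is constructed, its level curves consist of families of parallel straight segments inside each triangle that glue across the edges into closed broken trajectories of canonical length bounded by the diameters of the finitely many triangles, giving a quasi-Strebel structure in the sense of Definition~\ref{def:strebel}. The main obstacle is precisely the global compatibility step: in the even case the $\pm 1$ freedom makes it automatic, whereas in the odd case an arithmetic obstruction genuinely appears, and the rationality hypothesis in (ii) is exactly what removes it.
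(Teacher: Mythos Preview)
Your approach is quite different from the paper's, and the global matching step contains a genuine gap. The paper does \emph{not} try to glue one linear function per triangle across a general saddle-connection triangulation. Instead it first proves a structural result (Proposition~\ref{pr:decomposition}): the flat surface decomposes into infinite cylinders, triangles and trapezoids \emph{all of whose boundary edges are segments of horizontal trajectories}. Each tile is then filled \emph{independently} by nested closed broken trajectories (similar triangles shrinking to the incenter; the switching set is the three angle bisectors), so the boundary of every tile is itself a level curve and no cross-edge compatibility has to be checked. For even $k$ a general trapezoid splits into a symmetric trapezoid plus a triangle, both with horizontal sides; for odd $k$ this splitting fails, and the paper instead shows that under the period hypothesis all vertices of the decomposition lie in $\QQ[e^{2\pi i/k}]$ and then invokes Kenyon's theorem to retile each piece by triangles with horizontal sides. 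The arithmetic hypothesis enters through Kenyon's theorem, not through a linear system on the $\zeta_T$'s.

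Your cycle-condition argument does not go through. If you enforce $\zeta_{T_2}\epsilon\in\{\zeta_{T_1},-\zeta_{T_1}\}$ across every edge, then traversing the triangles around a conical singularity of order $m$ forces the holonomy $e^{2\pi i m/k}$ to lie in $\{\pm 1\}$; for any singularity with $m\not\equiv 0\pmod{k/2}$ (which Lemma~\ref{lm:skeleton} shows must occur as a node) this is false, so the sign freedom is \emph{not} enough, even for $k$ even. Moreover the case $\zeta_{T_2}\epsilon=-\zeta_{T_1}$ does not make continuity ``automatic'': then $\Phi_{T_2}=-\Phi_{T_1}+\text{const}$, so equality along $e$ forces $e$ to be one particular level line, not an arbitrary saddle connection. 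If instead you allow other values of $\zeta_{T_2}\epsilon$, the edge is pinned to one specific bisector direction, which your triangulation has no reason to respect. You also never treat the additive constants: even when the directional condition is met, matching the values of $\text{Im}(\zeta_T W_T)$ across all edges is a separate cocycle problem over the relative periods that you have not analysed. For odd $k$ your assertion that this ``system is solvable precisely when the periods lie in $\QQ[e^{2\pi i/k}]$'' is unsupported, and in the paper the work is done instead by Kenyon's tiling theorem.
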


\begin{remark} For $k$ odd, differentials satisfying  condition (ii) are dense in the space of all differentials with admissible singularities. 
\end{remark}











To prove Theorem~\ref{th:main}, �we will need the following preliminary statement. 

\begin{proposition}\label{pr:decomposition}
For any $k\ge 3$ and any $k$-differential $\Psi$ with admissible singularities, there exists a decomposition of the flat model of $\Psi$ into triangles, trapezoids and infinite cylinders such that

\noindent
{\rm (a)} each singularity of $\Psi$  is one of its vertices; 
 
 \noindent
{\rm (b)} each boundary edge of the decomposition is a segment of a horizontal trajectory.
\end{proposition}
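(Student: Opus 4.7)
The plan is to build the decomposition in stages: excise the infinite cylinders at the deepest poles, lay down a horizontal skeleton of separatrices on the remaining compact surface, subdivide density domains by additional horizontal cuts, and finally refine all complementary regions into triangles and trapezoids.

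For the first stage, the admissibility condition, combined with the local normal form of Proposition A, guarantees that each pole of order $-k$ has a neighborhood isometric to the end of a half-infinite flat cylinder whose cross-sections are closed horizontal trajectories. I would choose one such cross-section per pole, declare the bounded half-cylinder a cell of the decomposition (with the pole itself as the vertex at infinity), and pass to the complementary compact surface $Y'$ whose boundary is a disjoint union of closed horizontal trajectories and whose interior singularities are all conical, i.e., zeros and poles of order strictly greater than $-k$.

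For the second and third stages, at each conical singularity $p$ of order $m$ on $Y'$ I would emit the full bouquet of $m+k$ outgoing horizontal separatrices. By Lemma B, each such separatrix either terminates at another conical singularity (creating a horizontal saddle connection), reaches a boundary circle of $Y'$, or is dense in some density domain. The separatrices of the first two types, together with the boundary circles, assemble into a finite piecewise-horizontal graph $\mathcal{H}\subset Y'$. For each density domain $D$, Proposition~\ref{pr:density} tells us that the holonomy is contained in $\{\pm 1\}$, so $\Psi|_{D}$ is either the $k$-th power of a $1$-form or the $(k/2)$-th power of a quadratic differential; hence $D$ inherits a genuine translation or half-translation structure in which the horizontal directions of $\Psi$ are globally defined. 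Classical results on translation surfaces (existence of periodic orbits in a dense set of directions, decomposition of minimal components into finitely many horizontal strips) then supply finitely many horizontal arcs inside $D$, with endpoints on the existing skeleton, whose addition to $\mathcal{H}$ eliminates all recurrent complementary behavior.

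For the final stage, each component of $Y'\setminus\mathcal{H}$ is a Euclidean polygon in the flat model with edges parallel to one of the $k$ horizontal directions and interior angles that are integer multiples of $\pi/k$. Adding finitely many further horizontal chords to split any polygon with more than four sides, and any quadrilateral that is not a trapezoid, yields the desired decomposition into triangles, trapezoids, and the infinite cylinders from stage one; conditions (a) and (b) are preserved because every added edge is horizontal and every added vertex lies on $\mathcal{H}$. The main obstacle is the density-domain step: Proposition~\ref{pr:density} provides holonomy control but does not itself produce the horizontal saddle connections needed to cut each $D$ into finitely many well-behaved pieces. Invoking the translation-surface machinery to guarantee these cuts (and their finiteness) is the crux of the argument; once the skeleton $\mathcal{H}$ is finite, the polygon-refinement step is routine bookkeeping.
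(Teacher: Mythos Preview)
Your overall architecture mirrors the paper's---excise the order-$(-k)$ cylinders, build a horizontal skeleton from separatrices, then refine the complementary pieces---but the step you yourself flag as the crux is a genuine gap, and it is exactly where the paper's argument diverges from yours.

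The translation-surface results you invoke do not deliver what condition~(b) requires. Masur's theorem produces periodic orbits in a \emph{dense set of directions}, but the edges of the decomposition must lie in the \emph{fixed} set of $k$ horizontal directions of $\Psi$; there is no reason any of these particular directions should contain a saddle connection inside a given density domain. Likewise, the ``horizontal strip'' or zippered-rectangle picture of a minimal component uses a transversal \emph{perpendicular} to the minimal direction, and that perpendicular is a horizontal direction of $\Psi$ only when $4\mid k$. Most tellingly, your density-domain step never uses the hypothesis $k\ge 3$: if the argument worked as written it would also prove the proposition for quadratic differentials, which is false (a generic holomorphic quadratic differential has minimal horizontal foliation and admits no decomposition with horizontal edges at all).

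The paper handles density domains in the opposite way: rather than subdividing them, it shows they cannot survive the cutting process. After adding to the cut all nowhere-dense critical trajectories and initial segments of the self-intersecting ones, suppose a critical trajectory $\gamma$ is still dense in some $D'\subseteq Y\setminus CUT$. Since $\gamma$ returns arbitrarily close to its initial singularity $A$, an angular sector of width at least $\pi$ at $A$ lies inside $D'$; because consecutive horizontal separatrices at $A$ are only $\tfrac{2\pi}{k}<\pi$ apart (here is where $k\ge 3$ enters, and the paper explicitly notes the argument fails for $k=2$), a second separatrix $\alpha$ transverse to $\gamma$ emanates into $D'$. One then argues that $\alpha$ is dense in a strictly smaller subdomain and iterates to a contradiction by descent. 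Replacing your translation-surface black box with this transverse-separatrix descent repairs the proof; the remaining stages (cylinder excision along the critical boundary, convexity of complementary pieces via Gauss--Bonnet, and slicing each convex polygon into triangles and trapezoids by lines parallel to one side) are then routine and essentially match what the paper does.
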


\begin{proof}
To start with, observe that if $\Psi$ has a pole of order $-k$, then by Lemma~\ref{lm:poles} its residue is such that the corresponding infinite cylinder is bounded by a union of some critical trajectories and therefore it can be cut off from $Y$ along this union. After cutting off all such cylinders, we get a new surface $\widetilde Y$ with finite area whose boundary consists of horizontal trajectories. (Recall that there exists only  finitely  many critical trajectories in $Y$ since $\Psi$ has a finite number of singularities.) Consider all the remaining critical trajectories satisfying the condition that they are nowhere dense in $\widetilde Y$ and remove their union from $\widetilde Y$.  All the remaining critical trajectories are dense and some of them are self-intersecting while the other are not.  
Let us additionally cut $\widetilde Y$ along a certain part of every remaining critical trajectory having  self-intersections. Namely, we remove a fragment of such trajectory starting at the conical singularity and ending at its first self-intersection. (If such a trajectory both starts and ends at conical singularities we do the latter procedure for both of its endpoints). Denote by $CUT$ the constructed cut of $Y$ which includes every curve we removed so far.

Denote by $\gamma$ one of the remaining critical trajectories. Then  $\gamma$ is dense in some domain of $Y$ and non-selfintersecting. At this point, there are two cases: either $\gamma$  hits $CUT$ or it is dense in some domain $D\subseteq (Y\setminus CUT)$. In the first case, similarly to the above, we add to $CUT$ the fragment of $\gamma$ between the conical singularity and its first intersection with $CUT$. 

Let us show by contradiction that the second case is impossible. Take some critical trajectory $\gamma$ which is dense in some domain $D^\prime \subseteq (Y\setminus CUT)$.  Then any other critical trajectory which starts in $D^\prime$ must be dense in some subdomain $\D^{\prime\prime}\subseteq D^\prime$. 

The trajectory $\gamma$ returns arbitrarily close to its starting point $A$ (which is a singular point of $\Psi$). Therefore, there is an angular sector at $A$ contained in $D^\prime$ and whose angle  is at least $\pi$, see Figure~\ref{figDensity}. Since $k>2$, there is still another non-selfintersecting critical trajectory $\alpha$ in $D^\prime$ starting at $A$ transversally to $\gamma$.  (Observe that this argument does not work for quadratic differentials which explains why quadratic differentials are not always quasi-Strebel.) 
 Notice that $\alpha$ should also be dense in some $\D^{\prime\prime}\subseteq D^\prime$  (because the geometry of $D^\prime$ is that of a quadratic differential, see Proposition~\ref{pr:density}). Notice that the case $D^{\prime\prime}=D^\prime$ is impossible since    $\alpha$ and $\gamma$ are transverse. Taking $\alpha$ instead of $\gamma$ we can iterate this step. However we get a contradiction after a finite number of steps since there is no infinite sequence of density domains strictly included in each other. 

In this way, the connected components of the complement to the constructed cut $CUT$ have corners with angles  smaller than or equal to $\pi$. Indeed, at each singularity the angle between any two neighboring critical trajectories equals $2\pi/k<\pi$. Moreover, for any regular point where (at least) two segments of $CUT$ intersect, there are either (at least) four angles  smaller than $\pi$ ot there are three angles one of which is $\pi$. By the Gauss-Bonnet theorem, a flat surface such that every corner of its boundary has angle smaller than $\pi$ is either a convex polygon (if at least one corner has an angle strictly smaller than $\pi$) or a cylinder (if every  angle of every corner is  $\pi$ or, equivalently, if there are no corners). Cylinders can be cut out along some horizontal directions in order to get a parallelogram.
Now, each convex polygon can be cut into a finite number of trapezoids and  triangles by taking  lines parallel to  one side of the polygon. 
\end{proof}

  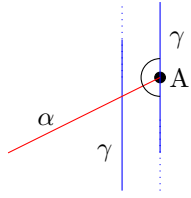
\begin{figure}

\begin{tikzpicture}
\filldraw[black] (2,2) circle (2pt) node[anchor=west] {A};
\draw [blue] (2,1) --(2,3);
\draw [blue, dotted] (2,0.5) --(2,1.5);
\draw [blue] (1.5,0.5) --(1.5,2.5);
\draw [blue, dotted] (1.5,2) --(1.5,3);
\draw [red] (2,2) --(0,1);

\filldraw[black] (0.5,1.25) circle (0pt) node[anchor=south] {$\alpha$};
\filldraw[black] (2,2.5) circle (0pt) node[anchor=west] {$\gamma$};
\filldraw[black] (1.5,1) circle (0pt) node[anchor=east] {$\gamma$};
\draw (2,2.25) arc (90:270:0.25) ;
\end{tikzpicture}


\vskip 0.5cm

\caption {The local picture at a critical point lying in a density domain.}
\label{figDensity}
\end{figure}

\begin{proof}[Proof of Theorem~\ref{th:main}] To settle Claim (i), 
we use Proposition~\ref{pr:decomposition} to decompose the flat surface $Y$ of $\Psi$ into infinite cylinders, triangles and trapezoids. Let us show that each of these tiles can be covered by a family of closed broken trajectories. Indeed, each infinite cylinder is covered by a family of smooth horizontal trajectories. 
Each triangle can be filled by the family of broken trajectories whose switching set consisting of three segments of bisectors meeting at the incenter,  i.e. the center of the inscribed circle, see the right part of Figure~\ref{figTriangle}. 
Finally, any trapezoid can be decomposed into a symmetric trapezoid and a triangle. Symmetric trapezoids can be covered by closed broken trajectories in a standard way shown in the left part of Figure~\ref{figTriangle}.

  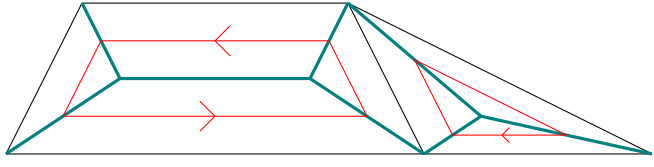
\begin{figure}



\begin{tikzpicture}
\draw (0,0) --(8.5,0);
\draw (0,0) --(1,2);
\draw (1,2) --(4.5,2);
\draw (8.5,0) --(4.5,2);
\draw (5.5,0) --(4.5,2);
\draw [teal, very thick] (0,0) --(1.5,1);
\draw [teal, very thick] (1,2) --(1.5,1);
\draw [teal, very thick] (4,1) --(1.5,1);
\draw [teal, very thick] (4,1) --(4.5,2);
\draw [teal, very thick] (4,1) --(5.5,0);

\draw [teal, very thick] (6.25,0.5) --(5.5,0);
\draw [teal, very thick] (6.25,0.5) --(8.5,0);
\draw [teal, very thick] (6.25,0.5) --(4.5,2);

\draw [red] (5.875,0.25) --(7.375,0.25);
\draw [red] (5.875,0.25) --(5.375,1.25);
\draw [red] (7.375,0.25) --(5.375,1.25);

\draw [red] (0.75,0.5) --(4.75,0.5);
\draw [red] (1.25,1.5) --(4.25,1.5);
\draw [red] (4.25,1.5) --(4.75,0.5);
\draw [red] (1.25,1.5) --(0.75,0.5);

\draw [red] (2.75,1.5) --(2.95,1.7);
\draw [red] (2.75,1.5) --(2.95,1.3);
\draw [red] (2.75,0.5) --(2.55,0.7);
\draw [red] (2.75,0.5) --(2.55,0.3);

\draw [red] (6.525,0.25) --(6.625,0.35);
\draw [red] (6.525,0.25) --(6.625,0.15);
\end{tikzpicture}

\caption {Tiling a general trapezoid by a symmetric trapezoid and a triangle and covering both tiles by closed broken trajectories.}
\label{figTriangle}
\end{figure}





To prove Claim (ii), we can without loss of generality assume that all the periods of $\root k \of \Psi$ lie in $\QQ[\zeta]$ where $\zeta$ is a $k$-th root of unity. Firstly,  define the subset $\mathcal L\subset Y$ consisting of all points $p$ such that a period of $\Psi$ over a path connecting $p$ of one of conical singularities is in $\QQ[\zeta]$. (Observe that by our assumptions if one period satisfies this property, then all of them will satisfy it as well.) We want to show that every vertex created in a decomposition from Proposition~\ref{pr:decomposition} belongs to $\mathcal L$. 

Indeed, choose two points  $A$ and $C$ in $\mathcal L$. Let $B$ be a point  in the intersection of the horizontal trajectories passing through $A$ and $C$ resp. We want to show that $B$ also belongs to $\mathcal L$. 
By choosing the appropriate $k$-th root we can assume that $\int_A^B\root k \of \Psi \in \bR$ and $\int_B^C\root k \of \Psi \in \zeta \cdot \bR$. Since $\QQ[\zeta]$ considered as a vector space over $\QQ$ is the direct sum of $\QQ[\zeta+\bar\zeta]\subset \bR$ and $\zeta\cdot \QQ[\zeta+\bar\zeta]\subset \zeta \cdot \bR$, the decomposition of the number $\int_A^C\root k \of \Psi $ into such summands is exactly $\int_A^C\root k \of \Psi=\int_A^B\root k \of \Psi+\int_B^C\root k \of \Psi$. Thus $\int_A^B\root k \of \Psi$ belongs to $\QQ[\zeta+\bar\zeta]\subset \QQ[\zeta]$ implying that $B\in \mathcal L$. 

\smallskip
Therefore the decomposition provided by Proposition~\ref{pr:decomposition} consists of triangles and trapezoids whose vertices belong to $\mathcal L$.
In Theorem 4 of \cite{Ke} R.~Kenyon proved that a convex polygon in $\bC$ is tileable with triangles whose angles are multiples of $\frac{\pi}{k}$ if and only if up to a common factor, its vertices lie in $\QQ[e^{\frac{2i\pi}{k}}]\subset \bC$. So, $\widetilde Y$ can be tiled with triangles whose sides are horizontal trajectories. Additionally, every such triangle can be filled by a family of broken trajectories with the switching set being the union of three bisectors, see the right part of Figure~\ref{figTriangle}.  
\end{proof}

\begin{remark} The result \cite{Ke}  of R.~Kenyon quoted above  states, in particular, that a convex polygon with sides in the directions of the $k$-th roots of unity cannot be tiled by triangles with sides in the same set of directions if the lengths of the sides of the polygon  do not belong to the field $\QQ[\cos (2\pi/k)]$. Because of this result we expect to find arithmetic obstructions to the existence of a quasi-Strebel structure for $k$-differentials with admissible singularities in the case of odd $k$.
\end{remark}

\section{Positive $k$-differentials and Heine-Stieltjes theory}\label{sec:HS}  

The material of this section is somewhat similar to our treatment of gradient and positive gradient quadratic differentials presented in \cite{BaSh}. 

\medskip
To connect quasi-Strebel structures with potential theory, let us recall some basic facts from complex analysis on Riemann surfaces, see e.g. \cite {GH}.
 Let~$Y$ be a Riemann surface (open or closed) and $h$ be a real- or
complex-valued smooth function on~$Y$.  

\begin{defi+}
The {\em Levy form} of $h$ (with respect to a local coordinate 
$z$) is a $(1,1)$-form given by 
\begin{equation}\label{levyn} 
\mu_h:=2i \frac{\partial^2 h}{\partial z \partial \bar z}\; dz\wedge d\bar z.
\end{equation}
\end{defi+}

In terms of the real and imaginary parts $(x,y)$ of  $z,$  $\mu_h$ is given by 
$$\mu_h=\left(  \frac{\d^2 h}{\d x^2}+ \frac{\d^2 h}{\d y^2} \right) \;  dx \wedge dy=\Delta h dx \wedge dy. $$

If $h$ is a smooth real-valued function, then $\mu_h$ is a  signed measure on $Y$ with a smooth density. In
potential theory $h$ is usually referred to as the {\em (logarithmic)
  potential}  of the measure $\mu_h$, see e.g. \cite{GH}, Ch.3. 
  
  Notice
that (\ref{levyn}) makes sense for  an arbitrary  complex-valued
distribution $h$ on~$Y$ if one interprets $\mu_h$  as a 2-current on
$Y$, i.e. a linear functional on the space of smooth compactly
supported functions on $Y$,  see e.g. \cite {Fed}. 
Such a current is necessarily exact since the inclusion of
smooth forms into currents induces the (co)homology isomorphism.
Notice that if $Y$ is compact and connected,
then the exactness of $\mu_h$ is equivalent to the vanishing of the
integral of $\mu_h$ over $Y$. 





\smallskip
Now observe that by definition, any ps-level function $\Phi$ for a $k$-differential $\Psi$ is piecewise-harmonic and continuous in the complement to the set of  poles of order $-k$ where it tends to $\pm \infty$ depending on the residue of $\Psi$ at such a pole, see Definition~\ref{def:imp}. Thus $\Phi$ defines a distribution on the Riemann surface $Y$ and its Levy form $\mu_\Phi$ is a measure supported on the union of the switching set $S_{\Psi,\Phi}$ and the set of poles of order $-k$. 

\medskip 
The final notion motivated by our original study of the Heine-Stieltjes theory is as follows. 

\begin{defi+}
A  $k$-differential $\Psi$   with at least one pole of order $-k$,  is called {\it positive} if there exists a  ps-level function $\Phi$  whose Levy form $\mu_{ \Phi}$ is a real measure the negative part of which is supported only on the set of poles of order $-k$ of $\Psi$.  
\end{defi+}




\medskip  
Let us now present some families of rational $k$-differentials which are positive in the above sense. (In these examples the first author initially  encountered quasi-Strebel structures and positive differentials of order exceeding $2$, see \cite {MaSh, STT, HoSh, BR}.) 

\medskip 

\begin{THEO}~\label{th:rat} Given an arbitrary monic polynomial $Q_k(z)=z^k+a_1z^{k-1}+...+a_k$ of degree $k$, consider the rational $k$-differential $\Psi=\frac{i^k dz^k}{Q_k(z)}, \; i=\sqrt{-1}$. Then $\Psi$ is a positive  differential.
\end{THEO}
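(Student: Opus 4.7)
The plan is to first verify that the given $\Psi$ has admissible singularities, then to exhibit a quasi-Strebel structure whose ps-level function $\Phi$ has a Levy form with the required sign structure.

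First I would check admissibility. The poles of $\Psi$ in $\bC$ are exactly the roots of $Q_k$: if $Q_k(z)=\prod_i (z-z_i)^{m_i}$, then $\Psi$ has a pole of order $-m_i$ at $z_i$, and $m_i\le k$. By Lemma~\ref{lm:eulerk} the poles-minus-zeros balance equals $2k$, so all critical points lie at the $z_i$ and at $\infty$; the substitution $w=1/z$ gives $\Psi = \frac{(-i)^k\,dw^k}{w^k(1+a_1w+\cdots+a_kw^k)}$ near $\infty$, so the critical point at $\infty$ is a pole of order exactly $-k$ with residue $(-i)^k = i^k\cdot(-1)^k$, which is $i^k$ times a real number as required by Definition~\ref{def:imp}. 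If $Q_k=(z-z_0)^k$ the single finite pole is likewise of order $-k$ with residue $i^k$, again admissible.

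Next I would obtain a quasi-Strebel structure $(\Psi,[\Phi])$ via Theorem~\ref{th:main}, but what matters for positivity is the constructive decomposition of the flat model given by Proposition~\ref{pr:decomposition}: cut off a cylinder around $\infty$ (this is allowed because of admissibility of the residue there) and triangulate the remaining surface, with vertices at the conical singularities. On the cylinder $\Phi$ is a single-branch $\mathrm{Im}(W)$ with $W=i\int dz/Q_k(z)^{1/k}$, so asymptotically $\Phi\sim \log|z|$ at $\infty$. On each triangle, $\Phi$ is assembled from three pieces $\mathrm{Im}(\zeta_j W)$, for $k$-th roots of unity $\zeta_1,\zeta_2,\zeta_3$, glued continuously along the three bisector segments meeting at the incenter, exactly as in the proof of Theorem~\ref{th:main}; trapezoids are treated analogously.

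Now I would analyze the Levy form $\mu_\Phi$. On smooth pieces $\Phi$ is the imaginary part of a holomorphic function, so $\Delta\Phi = 0$. Along a generic arc of the switching set two branches $\mathrm{Im}(\zeta_1 W)$ and $\mathrm{Im}(\zeta_2 W)$ agree, and the distributional Laplacian is $|\nabla(\mathrm{Im}(\zeta_1 W)-\mathrm{Im}(\zeta_2 W))|$ times arc-length, with a sign one can fix by exchanging $\zeta_1\leftrightarrow \zeta_2$ inside the level-equivalence class; one arranges $\Phi$ to be locally the maximum of the two branches, so each arc contributes a \emph{positive} measure. At a conical singularity $z_i$ with $m_i<k$ the natural parameter vanishes like $(z-z_i)^{(k-m_i)/k}$, so $\Phi$ is continuous and bounded there and contributes no point mass. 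Finally, at a pole of order $-k$ (always $\infty$, and $z_0$ when $Q_k=(z-z_0)^k$) $W$ has a logarithmic singularity, $\Phi\sim \pm\log|\cdot|$, producing a real point mass in $\Delta\Phi$ of sign determined by the residue $i^k a$ with $a\in\bR$. Consequently $\mu_\Phi$ is a real measure whose negative part is supported precisely on the poles of order $-k$, which is the definition of positivity.

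The main obstacle will be the coherent global choice of the labels $\zeta_j$ so that $\Phi$ is simultaneously continuous across every arc of the switching set and locally a maximum of the two adjacent branches; this is a combinatorial/monodromy bookkeeping issue around each conical singularity, depending on the configuration of the roots of $Q_k$. A secondary technical point is that for odd $k$ Theorem~\ref{th:main}(ii) does not automatically apply to arbitrary $Q_k$ (its periods need not lie in $\QQ[e^{2\pi i/k}]$); here one has to exploit the very special feature of these rational $\Psi$ — a single admissible pole at $\infty$ whose cylinder can be enlarged to swallow almost the whole surface — to produce a compatible decomposition directly, bypassing Kenyon's tiling theorem.
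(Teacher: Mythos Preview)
Your approach is genuinely different from the paper's and, as written, has a real gap at precisely the point you flag as ``the main obstacle.''

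The paper does \emph{not} start from the general quasi-Strebel construction of Theorem~\ref{th:main} and then try to arrange positivity of the Levy form. Instead it obtains Theorem~\ref{th:rat} as the special case $\widetilde V\equiv 1$, $Q=Q_k$ of Corollary~\ref{cor:imp}, whose proof runs through Heine--Stieltjes theory (Theorem~\ref{th:C}). Concretely: there is a probability measure $\mu$ (arising as a weak limit of root-counting measures of Stieltjes polynomials) whose Cauchy transform satisfies $\mathcal C_\mu(z)^k=\widetilde V(z)/Q(z)$. The ps-level function is taken to be the logarithmic potential $u_\mu(z)=\int\log|z-\zeta|\,d\mu(\zeta)$. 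Its Levy form is $\mu$ itself, which is \emph{a priori} a positive measure on $\bC$ --- positivity is built in, not arranged afterwards. The remaining work is to check that the level curves of $u_\mu$ are horizontal trajectories of $\Psi$, which follows from $\nabla u_\mu=\overline{\mathcal C_\mu}$ and the relation $\mathcal C_\mu^k=\widetilde V/Q$. The only negative contribution is the point mass $-1$ at $\infty$, the sole pole of order $-k$.

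In your route, positivity of $\mu_\Phi$ is the entire content of the theorem, and you do not establish it. The claim that ``one arranges $\Phi$ to be locally the maximum of the two branches'' by ``exchanging $\zeta_1\leftrightarrow\zeta_2$ inside the level-equivalence class'' is not justified: level-equivalent ps-level functions differ only by $\Phi\mapsto\pm\Phi+c$ on packs of broken trajectories, which does not let you flip the sign of the jump across one arc independently of the others. At a trivalent node (e.g.\ the incenter of one of your triangles) you would need $\Phi$ to equal $\max(h_1,h_2,h_3)$ simultaneously, and for odd $k$ the branches $\mathrm{Im}(\zeta_j W)$ are fixed (not free up to sign), so there is no reason this max structure is attainable; even for even $k$ the global bookkeeping across all tiles and gluings is exactly the unresolved ``combinatorial/monodromy'' problem you identify. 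The paper's potential-theoretic construction sidesteps this completely.

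A secondary issue you also note: for odd $k$, Theorem~\ref{th:main}(ii) does not cover arbitrary $Q_k$, and your suggestion to ``enlarge the cylinder to swallow almost the whole surface'' is only a heuristic. The paper's approach via $u_\mu$ requires no rationality hypothesis and works uniformly in $k$.
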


Theorem~\ref{th:rat} is a special case of Corollary~\ref{cor:imp} below. 
Further, given a monic polynomial $Q(z)$ of degree $\ell$ and a positive integer $k\ge 2$, take the differential operator 
\begin{equation}\label{eq:op}
T=Q(z)\frac{d^k}{dz^k}.
\end{equation}

Assuming that $\ell \ge k$, consider the related Heine-Stieltjes problem which is as follows.  
For a non-negative integer $n$, find  all pairs of monic polynomials $(V(z), S(z))$ such that $\deg V=\ell-k$ and $\deg S=n$ which solve the equation 

\begin{equation}\label{eq:HS}
TS_n(z)-(n)_kV(z)S(z)=Q(z)\frac{d^k S}{dz^k}-(n)_kV(z)S(z)=0,
\end{equation}
 where $(n)_k:=n(n-1)\dots(n-k+1)$. ($V$ is classically known as the {\it Van-Vleck}  polynomial and $S$ as the {\it Stieltjes} polynomial for the equation \eqref{eq:HS}.)

\smallskip 
  The following results can be found in \cite{Sh}, Theorem 4. 

\begin{THEO} In the above notation, for every $n\ge k$, there exist exactly $\binom{n+\ell-k}{\ell -k}$ pairs $(V(z), S(z))$ counting multiplicities which solve \eqref{eq:HS}.  Moreover all roots of every $V(z)$ and every $S(z)$ are located in the convex hull $Conv(Q)\subset \bC$ of the roots of $Q(z)$. 
\end{THEO}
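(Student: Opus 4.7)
The plan splits into two largely independent parts. For the enumeration, the idea is to projectivize both variables and compute a Bezout number on a product of projective spaces. Encode $V$ and $S$ as points of $\mathbb P^{\ell-k}$ and $\mathbb P^n$ respectively, with monic representatives occupying the standard affine chart $v_{\ell-k}=s_n=1$. The naive equation $QS^{(k)}-(n)_k VS$ is not bihomogeneous since $QS^{(k)}$ is linear in $S$ alone while $VS$ is bilinear; I would instead work with the rescaled equation
\[
v_{\ell-k}\,QS^{(k)}-(n)_k\,VS=0,
\]
which agrees with the original on the monic chart and is bihomogeneous of bidegree $(1,1)$ in $([V],[S])$. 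Its $z$-coefficients yield $n+\ell-k+1$ bidegree $(1,1)$ conditions, the top one (coefficient of $z^{n+\ell-k}$) vanishes identically, and the remaining $n+\ell-k$ cut $\mathbb P^{\ell-k}\times\mathbb P^n$ in an expected finite set of cardinality
\[
(a+b)^{n+\ell-k}\cdot[\mathbb P^{\ell-k}\times\mathbb P^n]=\binom{n+\ell-k}{\ell-k},
\]
by a standard intersection computation (with $a,b$ the hyperplane classes on the two factors). To see this count is realized inside the monic chart, one checks that neither $v_{\ell-k}=0$ nor $s_n=0$ supports solutions — the rescaled equation collapses there to $VS\equiv 0$, impossible for nonzero $V,S$ — and properness of the intersection follows from $(n)_k\ne 0$, where the hypothesis $n\ge k$ enters.

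For the containment of roots in $Conv(Q)$, I would argue by contradiction via Gauss--Lucas. Treating $S$ first, pick a root $z_0$ of $S$ outside $Conv(Q)$ lying at a strict vertex of $\mathrm{Conv}(\mathrm{roots}(S))$; such a $z_0$ exists whenever some root of $S$ is outside $Conv(Q)$ by choosing a direction of support pointing away from $Conv(Q)$. Since $V(z_0)S(z_0)=0$ and $Q(z_0)\ne 0$, the equation forces $S^{(k)}(z_0)=0$. Writing $S(z)=(z-z_0)^m T(z)$ with $T(z_0)\ne 0$, the case $m\ge k$ immediately gives $\mathrm{ord}_{z_0}V=-k$, a contradiction, while the case $m<k$ forces $T^{(k-m)}(z_0)=0$ by Leibniz. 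The latter contradicts Gauss--Lucas applied to $T$, whose roots are those of $S$ with $z_0$ removed: strict vertex-hood of $z_0$ in $\mathrm{Conv}(\mathrm{roots}(S))$ places it outside $\mathrm{Conv}(\mathrm{roots}(T))$, while $z_0\in\mathrm{roots}(T^{(k-m)})\subseteq\mathrm{Conv}(\mathrm{roots}(T))$. Once the roots of $S$ are confined to $Conv(Q)$, the claim for $V$ follows at once: any root $w$ of $V$ outside $Conv(Q)$ is a zero of $QS^{(k)}$ but not of $Q$, hence of $S^{(k)}$, yet $w\notin Conv(Q)\supseteq\mathrm{Conv}(\mathrm{roots}(S))$ contradicts Gauss--Lucas applied to $S$.

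The principal obstacle I anticipate lies in the properness of the Bezout intersection: one must show the $n+\ell-k$ hypersurfaces cut out a zero-dimensional subscheme of length exactly $\binom{n+\ell-k}{\ell-k}$, which requires excluding positive-dimensional excess components, especially along loci where $V$ and $S$ develop common factors with $Q$. On the convex-hull side, the delicate point is the local order-of-vanishing analysis when $z_0$ is a low-multiplicity root of $S$ (the case $m<k$), together with the justification that a strict extreme vertex of $\mathrm{Conv}(\mathrm{roots}(S))$ outside $Conv(Q)$ can always be chosen, which a priori needs a perturbation argument when multiple roots tie at the maximum distance.
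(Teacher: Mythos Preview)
The paper does not prove this statement: it is one of the lettered results explicitly ``borrowed from the existing literature'', quoted from \cite{Sh}, Theorem~4, without argument. So there is no in-paper proof to compare against, and I can only evaluate your proposal on its own terms.

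Your Gauss--Lucas argument for the containment in $Conv(Q)$ is correct and essentially complete. An extreme point $z_0$ of the convex hull of the roots of $S$ lying outside $Conv(Q)$ always exists whenever some root does (if every extreme point lay in the convex set $Conv(Q)$, so would the whole hull), so no perturbation is needed. The order-of-vanishing computation for $m\ge k$ and the Leibniz reduction to $T^{(k-m)}(z_0)=0$ for $m<k$ both go through; in the latter case note $\deg T^{(k-m)}=n-k\ge 0$, with $n=k$ giving a nonzero constant (immediate contradiction) and $n>k$ invoking Gauss--Lucas for $T$, whose roots exclude $z_0$ by the extreme-point choice. The deduction for $V$ then follows exactly as you say.

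Your bidegree-$(1,1)$ B\'ezout count on $\mathbb{P}^{\ell-k}\times\mathbb{P}^{n}$ gives the right number, and the exclusion of the boundary is nearly right, with one fix: when $s_n=0$ but $v_{\ell-k}\ne 0$ the equation does \emph{not} collapse to $VS\equiv 0$. Rather, if $m=\deg S<n$ with $m\ge k$, the coefficient of $z^{\ell+m-k}$ equals $v_{\ell-k}\,s_m\bigl((m)_k-(n)_k\bigr)\ne 0$ since $(x)_k$ is strictly increasing on integers $\ge k$; only for $m<k$ does $S^{(k)}$ vanish identically and force $VS\equiv 0$. The substantive gap is precisely the one you flag: zero-dimensionality of the intersection. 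For a fixed $V$ the polynomial solutions of $QS^{(k)}=(n)_kVS$ of degree at most $n$ form a linear subspace, and nothing in your outline excludes that for some special $Q$ and $V$ this subspace has dimension at least $2$, producing a positive-dimensional fiber in $\mathbb{P}^{n}$. Closing this is where the real content of the enumeration lies, and your sketch does not yet supply it.
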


Given a non-negative integer $n$, denote by $\{V_{i,n}\}, \; i=1,\dots, \binom{n+\ell-k}{\ell -k}$ the set of all Van Vleck polynomials whose Stieltjes polynomials have degree $n$. 
 In the above notation, assume that $\{V_{i_n,n}\}_{n=k}^\infty$ is a sequence of Van Vleck polynomials converging to some monic polynomial  $\widetilde V(z)$. (Recall that all $V_{i,n}(z)$ have degree $k-\ell$ and their roots lie in $Conv(Q)$ which implies that there exist plenty of converging sequences $\{V_{i_n,n}\}_{n=N}^\infty$.)  Denote by $S_{i,n}$ the  Stieltjes polynomial corresponding  to $V_{i,n}$.  (Here $\deg S_{i,n}=n$.) 
 
 \begin{definition}
 The {\em root-counting measure} $\mu_P$ of a univariate polynomial $P=\prod_{j=1}^d (x-x_i)$ is, by definition,  given by 
 $$\mu_d:=\frac{1}{d}\sum_{j=1}^d \delta(x-x_i), $$
where $\delta(x-u)$ is the unit point mass located at $u$.  
\end{definition}

\begin{THEO}\label{th:C} In the above notation, the sequence of root-counting measure $\{\mu_{i_n,n}\}$ of $\{S_{i_n,n}\}_{n=k}^\infty$ converges to the probability measure $\mu:=\mu_{\widetilde V,Q}$ satisfying the condition that 
$$\C^k_\mu(z)=\frac{\widetilde V(z)}{Q(z)}$$ 
a.e. in the complex plane $\bC$, where $\C_\mu(z):=\int_{\bC}\frac{d\mu(\xi)}{z-\xi}$ is the Cauchy transform of $\mu$. 
\end{THEO}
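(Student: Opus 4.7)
The plan is to deduce the limiting equation $\C_\mu^k = \widetilde V/Q$ directly from the Heine--Stieltjes ODE by working with the logarithmic derivative of $S_{i_n,n}$, and then to recover $\mu$ from this equation via the uniqueness of the Cauchy transform of a compactly supported probability measure.

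First, I would divide $Q(z)\,S_{i_n,n}^{(k)}(z) = (n)_k\,V_{i_n,n}(z)\,S_{i_n,n}(z)$ by $(n)_k\, S_{i_n,n}(z)$ at any $z$ where $S_{i_n,n}$ does not vanish, to obtain
\[
\frac{V_{i_n,n}(z)}{Q(z)} \;=\; \frac{S_{i_n,n}^{(k)}(z)}{(n)_k\,S_{i_n,n}(z)}.
\]
By hypothesis the left side converges to $\widetilde V(z)/Q(z)$ uniformly on compact subsets of $\bC\setminus\{\text{roots of }Q\}$. Writing $\mu_n:=\mu_{i_n,n}$, one has $S_{i_n,n}'/S_{i_n,n} = n\,\C_{\mu_n}$, and iterated application of the Leibniz rule yields
\[
\frac{S_{i_n,n}^{(k)}}{S_{i_n,n}} \;=\; n^k\,\C_{\mu_n}^k \;+\; \sum_{j=0}^{k-1} n^{j}\,P_{k,j}\bigl(\C_{\mu_n},\C_{\mu_n}',\dots,\C_{\mu_n}^{(k-1)}\bigr),
\]
where each $P_{k,j}$ is a fixed polynomial independent of $n$. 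Since every root of $S_{i_n,n}$ lies in $\text{Conv}(Q)$, the elementary bound $|\C_{\mu_n}^{(j)}(z)|\le j!\,\text{dist}(z,\text{Conv}(Q))^{-j-1}$ holds uniformly in $n$ on any compact set $K\subset\bC\setminus\text{Conv}(Q)$. Combined with $(n)_k=n^k(1+O(1/n))$, this gives
\[
\frac{S_{i_n,n}^{(k)}(z)}{(n)_k\,S_{i_n,n}(z)} \;=\; \C_{\mu_n}(z)^k \;+\; O(1/n)
\]
uniformly on $K$, and hence $\C_{\mu_n}^k\to \widetilde V/Q$ on $\bC\setminus\text{Conv}(Q)$.

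The measures $\mu_n$ are probability measures supported in the compact set $\text{Conv}(Q)$, so $\{\mu_n\}$ is tight. Any subsequence contains a weakly convergent sub-subsequence $\mu_{n_j}\to\mu_*$, and for such a subsequence $\C_{\mu_{n_j}}\to\C_{\mu_*}$ pointwise off $\text{Conv}(Q)$; therefore $\C_{\mu_*}(z)^k = \widetilde V(z)/Q(z)$ there. The normalization $\C_{\mu_*}(z)=1/z+O(1/z^2)$ at $\infty$ singles out one specific holomorphic $k$-th root of $\widetilde V/Q$ near $\infty$, and by analytic continuation $\C_{\mu_*}$ must coincide with this root throughout $\bC\setminus\text{Conv}(Q)$. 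Since a compactly supported probability measure is determined by its Cauchy transform via Stieltjes--Perron inversion, every subsequential limit equals a single probability measure $\mu=\mu_{\widetilde V,Q}$, so the whole sequence $\mu_n$ converges weakly to $\mu$, and the identity $\C_\mu^k=\widetilde V/Q$ holds a.e.\ on $\bC$ as claimed.

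The main obstacle is the uniform control of the error term in the induction reducing $S_{i_n,n}^{(k)}/(n^k S_{i_n,n})$ to $\C_{\mu_n}^k$: one must verify that every derivative of $\C_{\mu_n}$ occurring in the Leibniz expansion is bounded uniformly in $n$ on compact subsets of $\bC\setminus\text{Conv}(Q)$, for which the common compact support of the measures is essential. Everything else is a standard combination of Prokhorov's theorem with the uniqueness of the Cauchy transform.
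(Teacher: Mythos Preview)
The paper does not give a proof of Theorem~C: it is one of the lettered statements which, as the introduction says explicitly, are ``borrowed from the existing literature'' (in this case \cite{Sh}, with closely related arguments in \cite{BR} and \cite{HoSh}). So there is no in-paper proof to compare your proposal against. That said, your approach is exactly the standard one used in those references: rewrite the Heine--Stieltjes equation as an algebraic relation for the normalised logarithmic derivative $S_{i_n,n}'/(nS_{i_n,n})=\C_{\mu_n}$, use the uniform inclusion of the zero sets in $\mathrm{Conv}(Q)$ to bound the lower-order terms, and pass to the limit via Prokhorov compactness and the moment-determinacy of compactly supported measures from their Cauchy transform near infinity. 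The convergence and uniqueness portions of your sketch are correct.

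There is, however, one genuine gap. Your estimates on $\C_{\mu_n}$ and its derivatives are valid only on compact subsets of $\bC\setminus\mathrm{Conv}(Q)$, so the argument as written yields $\C_\mu^k=\widetilde V/Q$ only on that exterior domain, not ``a.e.\ in $\bC$'' as the theorem asserts. Since $\mathrm{Conv}(Q)$ has positive Lebesgue measure, this does not follow automatically. In the cited papers the extension to a.e.\ on $\bC$ is obtained by a separate argument: one shows that weak convergence $\mu_n\to\mu$ forces $\C_{\mu_n}\to\C_\mu$ in $L^1_{\mathrm{loc}}(\bC)$ (via convergence of the logarithmic potentials), passes to an a.e.\ convergent subsequence, and checks that the lower-order corrections tend to zero in $L^1_{\mathrm{loc}}$ as well. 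Your sketch omits this step, so the final sentence overstates what has actually been proved.
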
 

\begin{corollary}\label{cor:imp}
The $k$-th differential $\Psi:=\frac{i^k \cdot \widetilde V(z)}{Q(z)}dz^k$ is  positive. The corresponding ps-level function $\Phi$ is given by the logarithmic potential $u_\mu(z)$ of the probability measure $\mu$ where
$$u_\mu(z):=\int_\bC \log|z-\zeta| d\mu(\zeta).$$ 
\end{corollary}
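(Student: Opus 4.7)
My plan is to prove the corollary by exhibiting the logarithmic potential $\Phi := u_\mu$ as a ps-level function for $\Psi$ whose Levy form is a positive measure on $\bC$; the positivity of $\Psi$ in the sense of the preceding definition will then follow automatically. The starting point is Theorem~\ref{th:C}, which asserts $\C_\mu(z)^k = \widetilde V(z)/Q(z)$ almost everywhere on $\bC$. Combined with the definition $\Psi = i^k\,\widetilde V(z)/Q(z)\,dz^k$, this yields
\[
\Psi = \bigl(i\,\C_\mu(z)\,dz\bigr)^k,
\]
so $\omega := i\,\C_\mu(z)\,dz$ provides a single-valued branch of $\sqrt[k]{\Psi}$ on $\bC\setminus\mathrm{supp}\,\mu$.

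Next, I would verify Definition~\ref{def:strebel} for $\Phi = u_\mu$. Differentiating under the integral sign gives $\partial_z u_\mu = \tfrac12\C_\mu(z)$ outside $\mathrm{supp}\,\mu$, and since $u_\mu$ is real-valued,
\[
du_\mu \;=\; 2\,\mathrm{Re}(\partial_z u_\mu\,dz) \;=\; \mathrm{Re}(\C_\mu\,dz) \;=\; \mathrm{Im}(i\,\C_\mu\,dz) \;=\; \mathrm{Im}\,\omega,
\]
which is exactly condition (ii) of Definition~\ref{def:strebel}. In the flat coordinate $W = \int\omega = x+iy$ on the flat model of $\Psi$, this identity reads simply $du_\mu = dy$, so $u_\mu$ is piecewise affine in the flat charts, verifying condition~(i); its non-smooth locus is contained in $\mathrm{supp}\,\mu$ and becomes the switching set. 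Because $u_\mu$ is harmonic on $\bC\setminus\mathrm{supp}\,\mu$, continuous on $\bC$, and satisfies $u_\mu(z)\sim\log|z|\to+\infty$ as $z\to\infty$ (and $\infty$ is a pole of $\Psi$ of order $-k$, as $\deg Q-\deg\widetilde V = k$ in the setup of Theorem~\ref{th:C}), every regular level set $\{u_\mu = c\}$ is a finite union of closed curves on $\bC P^1$; those components cutting across $\mathrm{supp}\,\mu$ transversally are closed broken trajectories of $\Psi$ and $\Psi^*$ in the sense of Lemma~\ref{lm:directions}, establishing condition~(iii).

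For positivity, differentiating $u_\mu$ distributionally gives the Levy form
\[
\mu_\Phi \;=\; \Delta u_\mu\,dx\wedge dy \;=\; 2\pi\,\mu \quad \text{on }\bC,
\]
since $\log|z-\zeta|$ is, up to the factor $2\pi$, the fundamental solution of the Laplacian. As $\mu$ is a positive probability measure by Theorem~\ref{th:C}, $\mu_\Phi$ is a positive measure on $\bC$; total mass balance on $\bC P^1$ (cf.\ Lemma~\ref{lm:eulerk}) then forces a negative atom of mass $2\pi$ only at $z=\infty$, which is precisely the unique pole of $\Psi$ of order $-k$. Hence the negative part of $\mu_\Phi$ is supported exclusively on order-$-k$ poles, which is the positivity condition.

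I expect the main obstacle to lie in the second paragraph: tracking the coincidence of the switching set with a $1$-dimensional piecewise-smooth subset of $\mathrm{supp}\,\mu$, and ruling out dense-trajectory behaviour (type (iii) of Lemma~\ref{lm:tahar}) so that almost every level set of $u_\mu$ really is a finite closed broken trajectory. The explicit piecewise-linear structure of $u_\mu$ in the flat coordinate $W$ together with Proposition~\ref{pr:density} should handle both issues, reducing the geometric content of the corollary to the analytic identity $\C_\mu^k = \widetilde V/Q$ supplied by Theorem~\ref{th:C}.
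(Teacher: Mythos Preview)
Your proposal is correct and follows essentially the same approach as the paper's own (sketch of) proof: both derive $du_\mu=\mathrm{Im}\bigl(i\,\C_\mu\,dz\bigr)$ from $\partial_z u_\mu=\tfrac12\C_\mu$ and Theorem~\ref{th:C}, use $u_\mu(z)\sim\log|z|$ to obtain closedness of level curves, identify the Levy form with (a positive multiple of) $\mu$, and locate the sole negative contribution at the order~$-k$ pole at $\infty$. Your organization around the clauses of Definition~\ref{def:strebel} is slightly more explicit, but the substance and the points left to external references (the structure of $\mathrm{supp}\,\mu$ as a finite union of arcs) are the same.
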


\begin{proof} (Sketch, comp. the final section of \cite{HoSh} and \S~4 of \cite{BoSh}.) 
Recall that 
$$\C_\mu(z)=\frac{\partial u_\mu(z)}{\partial z}; \quad \mu=\frac{1}{\pi} \frac{\partial \C_\mu(z)} {\partial \bar z}$$
where the derivatives are understood in the distributional sense.  These relation imply that $\mu$ is the Levy form of $u(z)$. The logarithmic potential $u_\mu(z)$ is continuous outside the set of point masses of $\mu$ and harmonic outside its support. One can directly check that Theorem~\ref{th:C}  can be reinterpreted as the statement that the level curves of the logarithmic potential $u_\mu(z)$ of the measure $\mu$  constructed in this theorem 
 consist of segments of horizontal trajectories of the differential $\Psi(z)=\frac{i^k \widetilde V(z)}{Q(z)}dz^k$. Indeed, the gradient of $u_\mu(z)$ equals $\bar \C_\mu(z)$ where bar stands for the usual complex conjugation. Thus the tangent direction  $\delta\in \bC$  to the level curve of $u_\mu(z)$ at a point $p$ is given by  $\kappa\,i\, \bar \C_\mu(p)$ for some real $\kappa$. Thus it will satisfy the condition $\frac{i^k \widetilde V(p)}{Q(p)}\delta^k>0$. Additionally, $u_\mu(z)$ approaches $\log |z|$ when $|z|\to \infty$ which implies that almost all connected components of its level curves are closed non-selfintersecting curves in $\bC$. The measure $\mu$ is positive in $\bC$ and its extension to $\bC P^1\supset \bC$ is the exact $2$-current with the point mass $-1$ at $\infty$. Observe that $\infty$  is the pole of order $-k$ for the $k$-differential $\Psi(z)=\frac{i^k \widetilde V(p)}{Q(p)}dz^k$ with the residue $-i^k$. Finally, the support of $\mu$  �consists of a finite number of curve segments  globally forming an embedded forest in $\bC$ where each curve is either a segment of a horizontal trajectory of $\Psi(z)$ or a segment of a  trajectory of $\Psi^\ast(z)=\frac{i^{k+1} \widetilde V(z)}{Q(z)}dz^k$, see more details in \cite{HoSh}. 
\end{proof}

Concrete illustrations of the switching sets for  $u(z)$ of such cubic differentials with $\deg \widetilde V=1$ and $\deg Q=4$ are given in Fig.~\ref{figVV}, see more details and explanations in  Fig.~1 and 2 of \cite {HoSh}.

  \begin{figure}

\begin{center}
\includegraphics[scale=0.35]{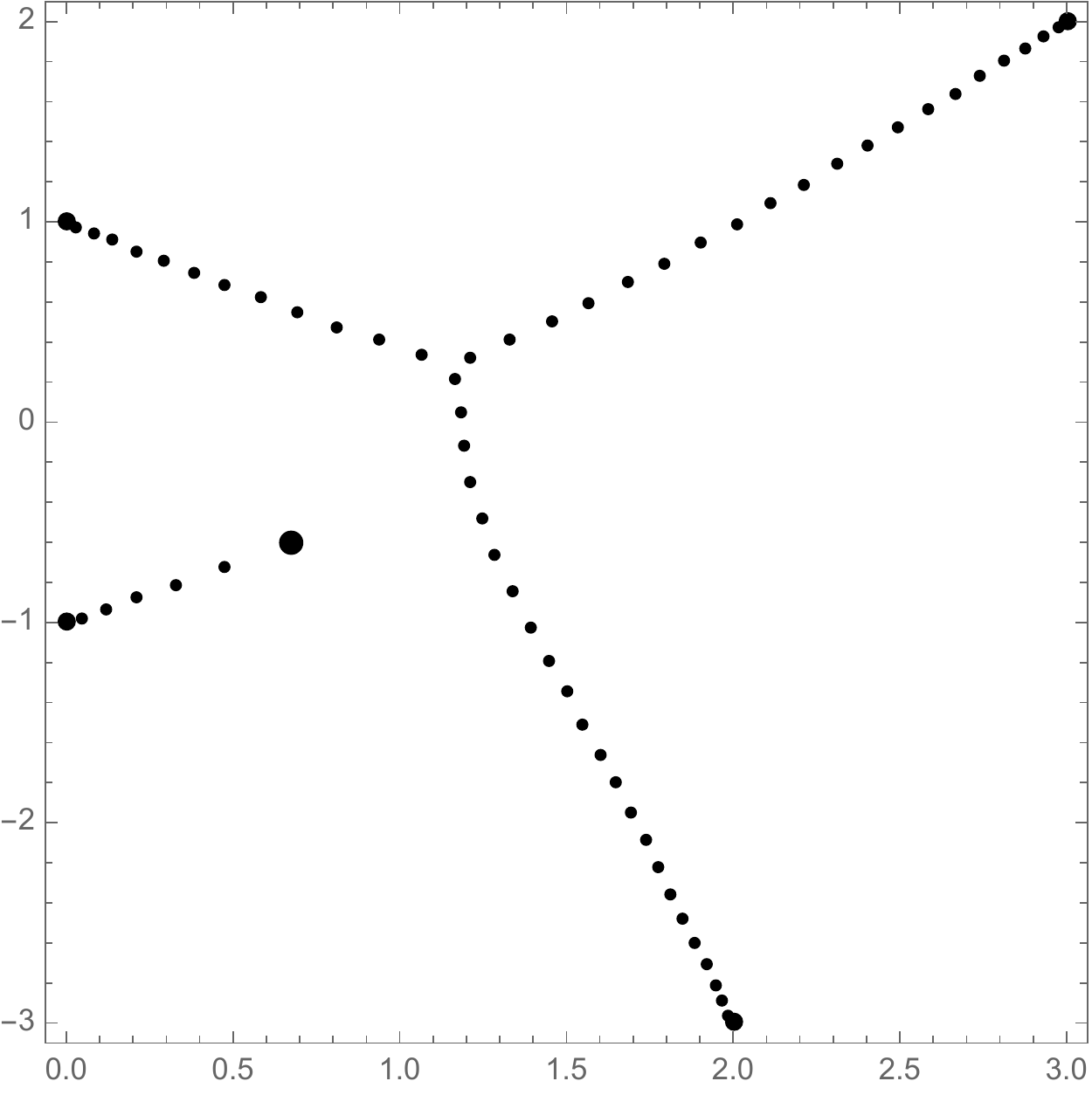} 
\includegraphics[scale=0.35]{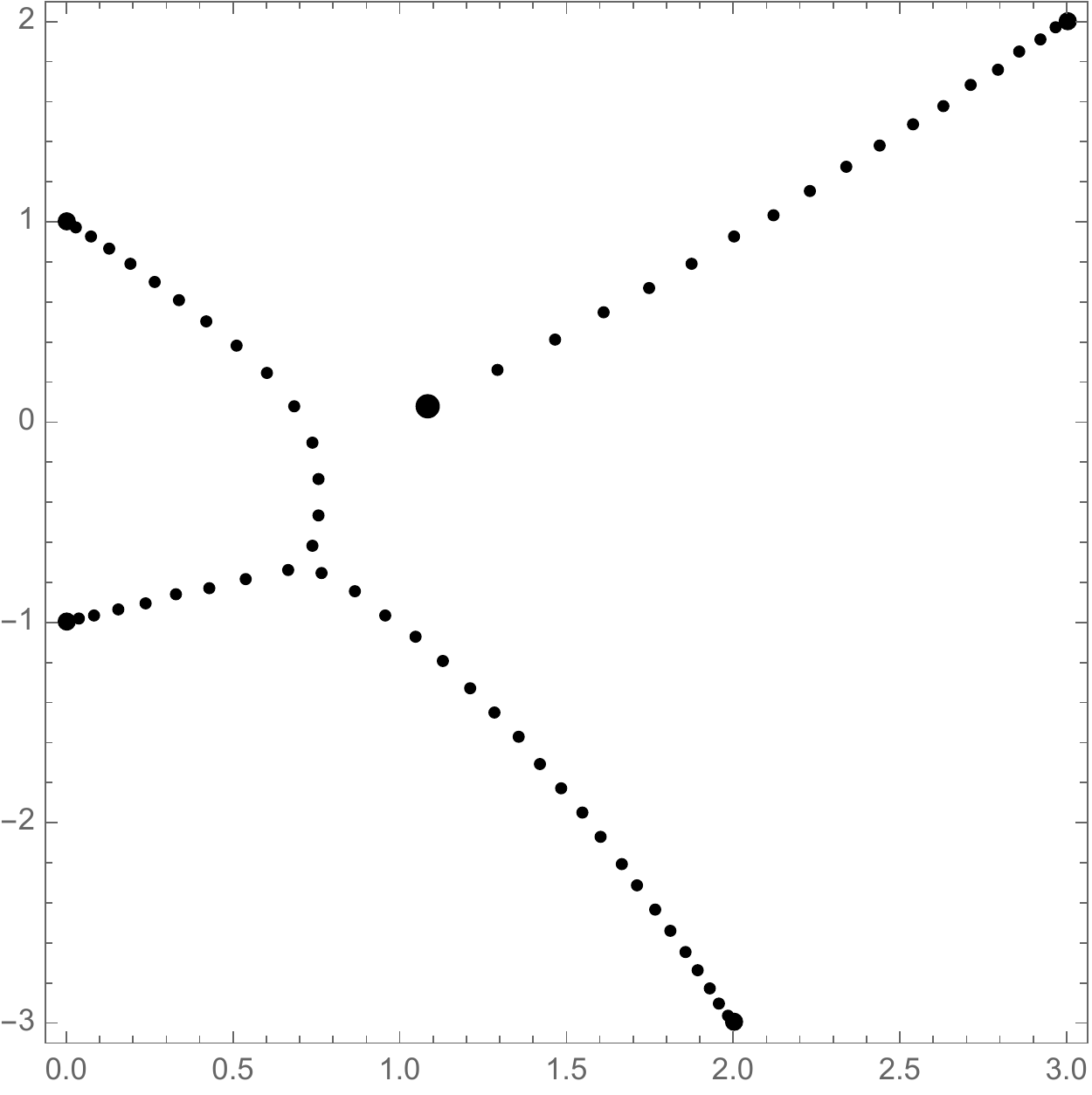} 
\end{center}

\vskip 0.5cm

\caption {The switching sets for the logarithmic potential $u_\mu(z)$ in case of the cubic differentials  $\Psi(z)=\frac{i^3 \cdot (z-\al)dz^3}{(z+i)(z-i)(z-2+3i)(z-3-2i)}$ with $\al\simeq 0.7-0.6i$ (left) and $\al\simeq 1.1+0.1i$ (right). (Possible choices of $\al$ are described in \cite{HoSh}).}
\label{figVV}
\end{figure}

\medskip
The special case when $\deg Q=k$ is called {\it exactly solvable} and is considered in more details in \cite{MaSh} and \cite{BR}. These papers contain substantial information about the switching sets of $u(z)$ for such differentials of order $k$ which are topologically planar trees in $\bC$ with vertices of valency one coinciding with all roots of $Q$ and special angles between the edges, see an illustration in Fig.~\ref{fig22}. 

  \begin{figure}

\begin{center}
\includegraphics[scale=0.6]{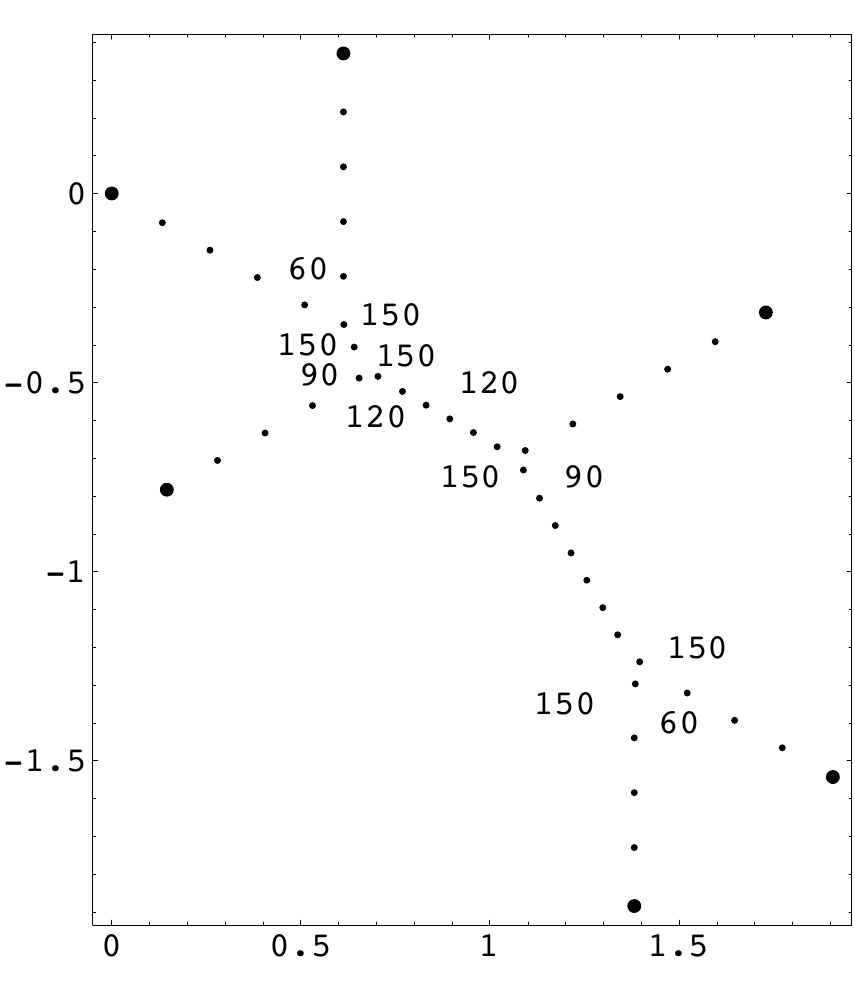} 
\end{center}

\vskip 0.5cm

\caption {The switching set of $u(z)$ for the rational sextic differential  $$\Psi(z)=\frac{- dz^6 }{z(z+i)(z-1-i)(z-2+2i)(z-2+3i)(z-3+2i)}$$ shown in a chart of the corresponding flat surface. (The numbers are the angles between the neighbouring  edges).}
\label{fig22}
\end{figure}

\section{Outlook}\label{sec:final}

\noindent 
{\bf 1.} Recall that the famous  Jenkins� Basic Structure Theorem \cite{Je}, Theorem 3.5, pp. 38-39], for any quadratic differential $\Psi$ given on a compact Riemann surface $Y$, up to a few exceptions, the set $Y\setminus  (K_\Psi \cup Cr_\Psi)$  consists of a finite number of the so-called circle, ring, strip and end domains. Here $K_\Psi$ is the union of all critical trajectories. (For the detailed definitions and information we refer to loc. cit). The names circle, ring and strip domain are reflecting the shapes of their images under the analytic continuation of the mapping given by the canonical coordinate; an end domain (also referred to as half-plane domain) is mapped by the canonical coordinate onto the half-plane.

 \begin{problem} Is there an analog of the latter theorem for higher order differentials?  \end{problem}
 
 Although due to  self-intersections of trajectories of $\Psi$ the existence of only finitely many types of  possible domains for $k>2$ seems problematic  one still wants  (at least crudely) to distinguish their possible global behaviour. 
 
 \smallskip
 \noindent 
{\bf 2.} The next question is related to Part (ii) of Theorem~\ref{th:main}.

\begin{problem}  Find necessary and sufficient conditions for a $k$-differential $\Psi$ of odd order $k$ to admit a quasi-Strebel structure? 
\end{problem}

Hopefully it is at least possible to find some arithmetic obstructions to the existence of a quasi-Strebel structure for odd $k$.

\smallskip
\noindent 
{\bf 3.} Typically a $k$-differential $\Psi$ with admissible singularities allows for an abundance of non-equivalent quasi-Strebel structures in the sense that  collections of their closed broken trajectories are distinct.  To streamline the situation, one can  introduce a natural partial order on  quasi-Strebel structures of a given $\Psi$. 
Observe that  the complement in $Y$ to the union of all special connected components of level curves and poles of order $-k$  is  a disjoint union of finitely many topological cylinders.  Each of them is covered by closed broken trajectories of the same type. We will call  such topological cylinders  \emph{packs of broken trajectories}. 

 In Figure~\ref{fig:many} we present a simple example of  two nonequivalent ps-level functions (quasi-Strebel structures) for the same quartic differential.  Observe additionally that on the right picture in Figure~\ref{fig:many}, the switching set consists of three connected components one of which is non-contractible. On the left picture of Figure~\ref{fig:many}, the switching set does not contain the zero of order 12.  
 




\smallskip�
Given two ps-level functions $\Phi_1$ and $\Phi_2$ on $Y$, we say that $\Phi_1$ gives a \emph{coarser subdivision of $Y$} �than $\Phi_2$ if the closure of every pack of broken trajectories of $\Phi_1$ is the union of the closures of some packs of broken trajectories of $\Phi_2$.  Notation, $\Phi_1\succeq \Phi_2$. In this situation we will also say that $\Phi_2$ provides a \emph {finer subdivision of $Y$} than $\Phi_1$.  We say that a quasi-Strebel structure $(\Psi, \Phi)$ is \emph{maximal} if there is no coarser ps-level function, i.e., no $\widetilde \Phi$ such that $\widetilde \Phi \succ \Phi$ exists. 





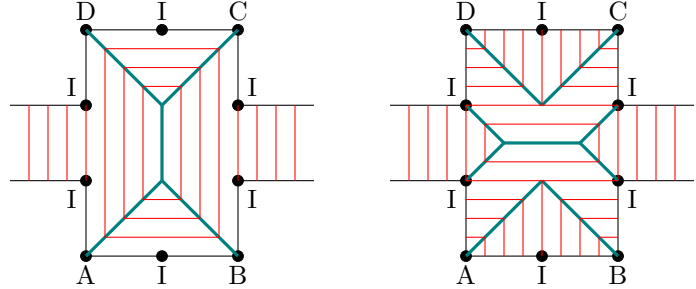
\begin{figure}

\begin{tikzpicture}
\draw (1,0) --(3,0);
\draw (1,3) --(3,3);
\draw (1,0)--(1,1);
\draw (0,1)--(1,1);
\draw (3,0)--(3,1);
\draw (3,1)--(4,1);
\draw (0,2)--(1,2);
\draw (3,2)--(4,2);
\draw (1,2)--(1,3);
\draw (3,2)--(3,3);

\draw (6,0) --(8,0);
\draw (6,3) --(8,3);
\draw (6,0)--(6,1);
\draw (5,1)--(6,1);
\draw (8,0)--(8,1);
\draw (8,1)--(9,1);
\draw (5,2)--(6,2);
\draw (8,2)--(9,2);
\draw (6,2)--(6,3);
\draw (8,2)--(8,3);

\filldraw[black] (1,0) circle (2pt) node[anchor=north] {A};
\filldraw[black] (3,0) circle (2pt) node[anchor=north] {B};
\filldraw[black] (1,3) circle (2pt) node[anchor=south] {D};
\filldraw[black] (3,3) circle (2pt) node[anchor=south] {C};

\filldraw[black] (6,0) circle (2pt) node[anchor=north] {A};
\filldraw[black] (8,0) circle (2pt) node[anchor=north] {B};
\filldraw[black] (6,3) circle (2pt) node[anchor=south] {D};
\filldraw[black] (8,3) circle (2pt) node[anchor=south] {C};

\filldraw[black] (2,0) circle (2pt) node[anchor=north] {I};
\filldraw[black] (2,3) circle (2pt) node[anchor=south] {I};
\filldraw[black] (1,1) circle (2pt) node[anchor=north east] {I};
\filldraw[black] (1,2) circle (2pt) node[anchor=south east] {I};
\filldraw[black] (3,1) circle (2pt) node[anchor=north west] {I};
\filldraw[black] (3,2) circle (2pt) node[anchor=south west] {I};

\filldraw[black] (7,0) circle (2pt) node[anchor=north] {I};
\filldraw[black] (7,3) circle (2pt) node[anchor=south] {I};
\filldraw[black] (6,1) circle (2pt) node[anchor=north east] {I};
\filldraw[black] (6,2) circle (2pt) node[anchor=south east] {I};
\filldraw[black] (8,1) circle (2pt) node[anchor=north west] {I};
\filldraw[black] (8,2) circle (2pt) node[anchor=south west] {I};

\draw [very thick] [teal] (1,0) -- (2,1);
\draw [very thick] [teal] (3,0) -- (2,1);
\draw [very thick] [teal] (1,3) -- (2,2);
\draw [very thick] [teal] (3,3) -- (2,2);
\draw [very thick] [teal] (2,1) -- (2,2);

\draw [very thick] [teal] (6,0) -- (7,1);
\draw [very thick] [teal] (8,0) -- (7,1);
\draw [very thick] [teal] (6,3) -- (7,2);
\draw [very thick] [teal] (8,3) -- (7,2);
\draw [very thick] [teal] (6.5,1.5) -- (7.5,1.5);
\draw [very thick] [teal] (6.5,1.5) -- (6,1);
\draw [very thick] [teal] (6.5,1.5) -- (6,2);
\draw [very thick] [teal] (7.5,1.5) -- (8,1);
\draw [very thick] [teal] (7.5,1.5) -- (8,2);

\draw [red] (0.25,1) --(0.25,2);
\draw [red] (0.5,1) --(0.5,2);
\draw [red] (0.75,1) --(0.75,2);
\draw [red] (1,1) --(1,2);
\draw [red] (1.25,0.25) --(1.25,2.75);
\draw [red] (1.5,0.5) --(1.5,2.5);
\draw [red] (1.75,0.75) --(1.75,2.25);
\draw [red] (2.25,0.75) --(2.25,2.25);
\draw [red] (2.5,0.5) --(2.5,2.5);
\draw [red] (2.75,0.25) --(2.75,2.75);
\draw [red] (3,1) --(3,2);
\draw [red] (3.25,1) --(3.25,2);
\draw [red] (3.5,1) --(3.5,2);
\draw [red] (3.75,1) --(3.75,2);

\draw [red] (1.25,0.25) --(2.75,0.25);
\draw [red] (1.25,2.75) --(2.75,2.75);
\draw [red] (1.5,0.5) --(2.5,0.5);
\draw [red] (1.5,2.5) --(2.5,2.5);
\draw [red] (1.75,0.75) --(2.25,0.75);
\draw [red] (1.75,2.25) --(2.25,2.25);

\draw [red] (5.25,1) --(5.25,2);
\draw [red] (5.5,1) --(5.5,2);
\draw [red] (5.75,1) --(5.75,2);
\draw [red] (6,1) --(6,2);
\draw [red] (8,1) --(8,2);
\draw [red] (8.25,1) --(8.25,2);
\draw [red] (8.5,1) --(8.5,2);
\draw [red] (8.75,1) --(8.75,2);
\draw [red] (6.25,1.25) --(6.25,1.75);
\draw [red] (7.75,1.25) --(7.75,1.75);
\draw [red] (6,1) --(8,1);
\draw [red] (6,2) --(8,2);
\draw [red] (6.25,1.25) --(7.75,1.25);
\draw [red] (6.25,1.75) --(7.75,1.75);

\draw [red] (6,0.75) --(6.75,0.75);
\draw [red] (8,0.75) --(7.25,0.75);
\draw [red] (6,2.25) --(6.75,2.25);
\draw [red] (8,2.25) --(7.25,2.25);
\draw [red] (6,0.5) --(6.5,0.5);
\draw [red] (8,0.5) --(7.5,0.5);
\draw [red] (6,2.5) --(6.5,2.5);
\draw [red] (8,2.5) --(7.5,2.5);
\draw [red] (6,0.25) --(6.25,0.25);
\draw [red] (8,0.25) --(7.75,0.25);
\draw [red] (6,2.75) --(6.25,2.75);
\draw [red] (8,2.75) --(7.75,2.75);

\draw [red] (6.25,0) --(6.25,0.25);
\draw [red] (6.25,2.75) --(6.25,3);
\draw [red] (6.5,0) --(6.5,0.5);
\draw [red] (6.5,2.5) --(6.5,3);
\draw [red] (6.75,0) --(6.75,0.75);
\draw [red] (6.75,2.25) --(6.75,3);
\draw [red] (7.25,0) --(7.25,0.75);
\draw [red] (7.25,2.25) --(7.25,3);
\draw [red] (7.5,0) --(7.5,0.5);
\draw [red] (7.5,2.5) --(7.5,3);
\draw [red] (7.75,0) --(7.75,0.25);
\draw [red] (7.75,2.75) --(7.75,3);
\draw [red] (7,0) --(7,1);
\draw [red] (7,2) --(7,3);
\end{tikzpicture}

\caption {A quartic differential with zero of order $12$ (marked by $I$), four poles of order $-3$ (marked by $A$, $B$, $C$ and $D$) and two poles of order $-4$ together with two nonequivalent ps-level functions. (The broken trajectories are shown by  the thin lines while the switching set is shown by the thick ones.)}
\label{fig:many}
\end{figure}

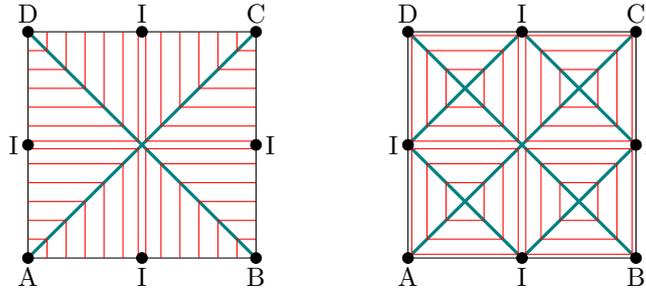
\begin{figure}

\begin{tikzpicture}
\draw (0,0) --(3,0);
\draw (0,0) --(0,3);
\draw (3,3)--(0,3);
\draw (3,3)--(3,0);

\draw (5,0) --(8,0);
\draw (5,0) --(5,3);
\draw (8,3)--(5,3);
\draw (8,3)--(8,0);

\draw [very thick] [teal] (0,0) -- (3,3);
\draw [very thick] [teal] (0,3) -- (3,0);

\draw [very thick] [teal] (5,0) -- (8,3);
\draw [very thick] [teal] (5,3) -- (8,0);
\draw [very thick] [teal] (5,1.5) -- (6.5,3);
\draw [very thick] [teal] (5,1.5) -- (6.5,0);
\draw [very thick] [teal] (8,1.5) -- (6.5,3);
\draw [very thick] [teal] (8,1.5) -- (6.5,0);

\draw [red] (1,0) --(1,1);
\draw [red](0,1) --(1,1);
\draw [red] (1.25,0) --(1.25,1.25);
\draw [red](0,1.25) --(1.25,1.25);
\draw [red] (0.5,0) --(0.5,0.5);
\draw [red](0,0.5) --(0.5,0.5);
\draw [red] (0.75,0) --(0.75,0.75);
\draw [red](0,0.75) --(0.75,0.75);
\draw [red] (0.25,0) --(0.25,0.25);
\draw [red](0,0.25) --(0.25,0.25);
\draw [red] (1.45,0) --(1.45,1.45);
\draw [red](0,1.45) --(1.45,1.45);

\draw [red] (2,3) --(2,2);
\draw [red](3,2) --(2,2);
\draw [red] (2.25,3) --(2.25,2.25);
\draw [red](3,2.25) --(2.25,2.25);
\draw [red] (2.5,3) --(2.5,2.5);
\draw [red](3,2.5) --(2.5,2.5);
\draw [red] (1.75,3) --(1.75,1.75);
\draw [red](3,1.75) --(1.75,1.75);
\draw [red] (2.75,3) --(2.75,2.75);
\draw [red](3,2.75) --(2.75,2.75);
\draw [red] (1.55,3) --(1.55,1.55);
\draw [red](3,1.55) --(1.55,1.55);

\draw [red] (0,1.75) --(1.25,1.75);
\draw [red] (1.25,3) --(1.25,1.75);
\draw [red] (0,2.75) --(0.25,2.75);
\draw [red] (0.25,3) --(0.25,2.75);
\draw [red] (0,2.5) --(0.5,2.5);
\draw [red] (0.5,3) --(0.5,2.5);
\draw [red] (0,2.25) --(0.75,2.25);
\draw [red] (0.75,3) --(0.75,2.25);
\draw [red] (0,2) --(1,2);
\draw [red] (1,3) --(1,2);
\draw [red] (0,1.55) --(1.45,1.55);
\draw [red](1.45,3) --(1.45,1.55);

\draw [red] (1.75,0) --(1.75,1.25);
\draw [red](3,1.25) --(1.75,1.25);
\draw [red] (2,0) --(2,1);
\draw [red](3,1) --(2,1);
\draw [red] (2.25,0) --(2.25,0.75);
\draw [red](3,0.75) --(2.25,0.75);
\draw [red] (2.5,0) --(2.5,0.5);
\draw [red](3,0.5) --(2.5,0.5);
\draw [red] (2.75,0) --(2.75,0.25);
\draw [red](3,0.25) --(2.75,0.25);
\draw [red] (1.55,0) --(1.55,1.45);
\draw [red](3,1.45) --(1.55,1.45);

\draw [red] (5.25,0.25) --(5.25,1.25);
\draw [red](6.25,0.25) --(5.25,0.25);
\draw [red] (5.25,1.25) --(6.25,1.25);
\draw [red](6.25,0.25) --(6.25,1.25);
\draw [red] (5.5,0.5) --(5.5,1);
\draw [red](6,1) --(5.5,1);
\draw [red] (6,1) --(6,0.5);
\draw [red](5.5,0.5) --(6,0.5);

\draw [red] (6.75,0.25) --(6.75,1.25);
\draw [red](7.75,0.25) --(6.75,0.25);
\draw [red] (6.75,1.25) --(7.75,1.25);
\draw [red](7.75,0.25) --(7.75,1.25);
\draw [red] (7,0.5) --(7,1);
\draw [red](7.5,1) --(7,1);
\draw [red] (7.5,1) --(7.5,0.5);
\draw [red](7,0.5) --(7.5,0.5);

\draw [red] (6.75,1.75) --(6.75,2.75);
\draw [red](7.75,1.75) --(6.75,1.75);
\draw [red] (6.75,2.75) --(7.75,2.75);
\draw [red](7.75,1.75) --(7.75,2.75);
\draw [red] (7,2) --(7,2.5);
\draw [red](7.5,2.5) --(7,2.5);
\draw [red] (7.5,2.5) --(7.5,2);
\draw [red](7,2) --(7.5,2);

\draw [red] (5.25,1.75) --(5.25,2.75);
\draw [red](6.25,1.75) --(5.25,1.75);
\draw [red] (5.25,2.75) --(6.25,2.75);
\draw [red](6.25,1.75) --(6.25,2.75);
\draw [red] (5.5,2) --(5.5,2.5);
\draw [red](6,2.5) --(5.5,2.5);
\draw [red] (6,2.5) --(6,2);
\draw [red](5.5,2) --(6,2);

\draw [red](5.05,0.05) --(6.45,0.05);
\draw [red](5.05,1.45) --(6.45,1.45);
\draw [red](5.05,0.05) --(5.05,1.45);
\draw [red](6.45,0.05) --(6.45,1.45);

\draw [red](6.55,0.05) --(7.95,0.05);
\draw [red](6.55,1.45) --(7.95,1.45);
\draw [red](6.55,0.05) --(6.55,1.45);
\draw [red](7.95,0.05) --(7.95,1.45);

\draw [red](5.05,1.55) --(6.45,1.55);
\draw [red](5.05,2.95) --(6.45,2.95);
\draw [red](5.05,1.55) --(5.05,2.95);
\draw [red](6.45,1.55) --(6.45,2.95);

\draw [red](6.55,1.55) --(7.95,1.55);
\draw [red](6.55,2.95) --(7.95,2.95);
\draw [red](6.55,1.55) --(6.55,2.95);
\draw [red](7.95,1.55) --(7.95,2.95);

\filldraw[black] (0,0) circle (2pt) node[anchor=north] {A};
\filldraw[black] (3,0) circle (2pt) node[anchor=north] {B};
\filldraw[black] (5,0) circle (2pt) node[anchor=north] {A};
\filldraw[black] (8,0) circle (2pt) node[anchor=north] {B};

\filldraw[black] (0,3) circle (2pt) node[anchor=south] {D};
\filldraw[black] (3,3) circle (2pt) node[anchor=south] {C};
\filldraw[black] (5,3) circle (2pt) node[anchor=south] {D};
\filldraw[black] (8,3) circle (2pt) node[anchor=south] {C};

\filldraw[black] (1.5,0) circle (2pt) node[anchor=north] {I};
\filldraw[black] (6.5,0) circle (2pt) node[anchor=north] {I};
\filldraw[black] (1.5,3) circle (2pt) node[anchor=south] {I};
\filldraw[black] (6.5,3) circle (2pt) node[anchor=south] {I};

\filldraw[black] (0,1.5) circle (2pt) node[anchor=east] {I};
\filldraw[black] (5,1.5) circle (2pt) node[anchor=east] {I};
\filldraw[black] (3,1.5) circle (2pt) node[anchor=west] {I};
\filldraw[black] (8,1.5) circle (2pt) node[anchor=west] {I};

\end{tikzpicture}

\caption {A quartic differential with a zero of order $4$ (marked by $I$) and four poles of order $-3$ (marked by $A$, $B$, $C$ and $D$) together with two nonequivalent ps-level functions. }
\label{fig:several}
\end{figure}
 
 \begin{problem}\label{pr:main} Given a $k$-differential $\Psi$ with admissible singularities, describe all its maximal quasi-Strebel structures.  
\end{problem}


\end{document}